\newtheorem{Th}{Theorem}[section]
\newtheorem{Lemma}[Th]{Lemma}
\newtheorem{Cor}[Th]{Corollary}
\theoremstyle{definition}
\newtheorem{Def}{Definition}[section]
\newtheorem{Example}{Example}[section]
\newtheorem{Fact}[Th]{Fact}
\newcommand{\beq}{\begin{equation}}
\newcommand{\eeq}{\end{equation}}
\newcommand{\abs}[1]{\left\lvert #1\right\rvert}
\def\scalar(#1,#2){(#1\mid#2)}
\DeclareMathOperator{\ap}{ap}
\DeclareMathOperator{\per}{per}
\DeclareMathOperator{\Per}{Per}
\DeclareMathOperator{\lcm}{lcm}
\newcommand{\Q}{\mathbb{Q}}
\newcommand{\C}{{\mathbb{C}}}
\newcommand{\Z}{{\mathbb{Z}}}
\newcommand{\N}{{\mathbb{N}}}
\begin{document}
\title{Distribution of polynomial orbits in Toeplitz systems}
\author{Kosma Kasprzak}
\maketitle
\begin{abstract}
We examine the convergence of ergodic averages along polynomials in Toeplitz systems and prove that it is possible for averages along one polynomial to converge, and along another to diverge. We also study density of the polynomial orbits in Toeplitz systems -- we show that it implies equidistribution of the polynomial orbits in the class of regular Toeplitz systems, but not in the class of strictly ergodic ones.
\end{abstract}
\maketitle

\section{Introduction}

For a topological dynamical system $(X, T)$ and a sequence of integers $(a_n)$ we can consider the sequence of iterates $(T^{a_n}y)$ for a point $y\in X$, which we call the orbit along $(a_n)$. The question of distribution of such sparse orbits enjoys a lot of recent activity, particularly for the sequence of primes or sequences $(P(n))$ of values of polynomials $P\in\Z[X]$. If $(X, T)$ is uniquely ergodic, one naturally considers equidistribution of the sparse orbits, i.e. the property that for any $y\in X$ and $F\in C(X)$ the averages
\begin{equation}\label{avgconv}
\frac{1}{N}\sum\limits_{n<N}F(T^{a_n}y)
\end{equation} converge to $\int_X F d\mu$, where $\mu$ is the unique $T$-invariant measure on $X$. This is however sometimes too restrictive because of local obstructions (i.e. because $(a_n)$ might not be well distributed in residue classes modulo integers), and the more interesting question is whether the sequence of averages (\ref{avgconv}) is convergent for every $y\in X$. Recall that by the work of Bourgain and Wierdl, this is true for almost every $y\in X$ along primes or polynomials.

Our goal will be to examine these properties, as well as density of the sparse orbits, in the class of Toeplitz systems, mainly along polynomial sequences $(P(n))$ for $P\in\Z[X]$. 

In \cite{Toeplitz}, the authors consider the distribution of orbits along primes in Toeplitz systems, focusing on the so-called regular Toeplitz systems, which are in particular strictly ergodic. The authors also treat polynomial orbits, and find an example of a Toeplitz system $X_x$ where averages (\ref{avgconv}) converge for the sequence of primes for every starting point, but not for $(n^2)$ with $y=x$.

We prove a similar result distinguishing orbits along two different polynomials:
\begin{Th}\label{mainmain}
Let $k, l>1$ be distinct integers. Then there exists a regular Toeplitz system $X_x$ over the alphabet $\{0, 1\}$ such that for every continuous map $F:X_x\to\C$ and $y\in X_x$,  the limit
\[
\lim_{N\to\infty}\frac{1}{N}\sum_{m\leq N}F(\sigma^{m^l}y)
\]
exists, but the limit
\[
\lim_{N\to\infty}\frac{1}{N}\sum_{m\leq N}G(\sigma^{m^k}x)
\]
does not exist for the continuous function $G(y)=(-1)^{y(0)}$.

Moreover, if $l$ is odd, and $q|k$ and $q\nmid l$ for some prime $q$, we can ensure that the orbit $(\sigma^{m^l}y)$ of any point $y\in X_x$ is equidistributed in $X_x$ with respect to the unique $\sigma$-invariant measure on $X_x$.  
\end{Th}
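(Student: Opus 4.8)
The plan is to realise everything through the arithmetic of the periodic structure. I build $X_x$ from a tower of periods $p_1\mid p_2\mid\cdots$ with $p_j=q_1\cdots q_j$ a product of primes I get to choose, so that $X_x$ is an almost $1$-$1$ extension of the odometer $\varprojlim\Z/p_j\Z$; here regularity means that the density $\delta_j$ of coordinates left undetermined (the ``holes'' $H_j\subseteq\Z/p_j\Z$) at level $j$ tends to $0$. Since $G(\sigma^{m^k}x)=(-1)^{x(m^k)}$ and $x(m^k)$ equals the level-$j$ pattern evaluated at $m^k\bmod p_j$ whenever that class is already decided, the first step is a sandwich lemma: for a polynomial $P$, if the density of $\{m:P(m)\bmod p_j\in H_j\}$ tends to $0$ as $j\to\infty$, then $\tfrac1N\sum_{m\le N}F(\sigma^{P(m)}y)$ converges for every $F$ and every $y$, because the undecided positions contribute uniformly little while the decided part is a Weyl average of a $p_j$-periodic function of $m$, which converges. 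Conversely, divergence forces this hole-hitting density to stay bounded below along $P(m)=m^k$.

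The decisive point is that with composite $p_j$ the map $a\mapsto a^k$ is highly non-injective modulo $p_j$, so a hole set of vanishing density can still absorb every $k$-th power. Whenever $k\nmid l$ there is a divisor $d\mid k$ with $d\ge2$ and $d\nmid l$ (e.g. $d=k$); I take each $q_i\equiv1\pmod d$ and let $H_j$ consist of the classes that are $d$-th power residues modulo every $q_i$. Then $|H_j|/p_j\approx d^{-j}\to0$, so the system stays regular, yet $m^k$—being a $d$-th power—lands in $H_j$ for every $m$ coprime to $p_j$, whereas $m^l$ lands there with density $\approx(\gcd(l,d)/d)^j\to0$ since $d\nmid l$. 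By the sandwich lemma the latter already yields convergence along $m^l$. Divergence along $m^k$ is then produced by toggling the value assigned to the freshly decided classes of $H_j$ between $0$ and $1$ along a rapidly growing sequence of levels, with scales $N_j$ matched to $p_j$ so that at scale $N_j$ almost every $m^k$ ($m\le N_j$) carries the current phase; the averages of $G(\sigma^{m^k}x)$ then oscillate and $\limsup>\liminf$.

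For the equidistribution clause the hypothesis furnishes a prime $q\mid k$, $q\nmid l$; taking $d=q$ and additionally requiring $\gcd(l,q_i-1)=1$ makes $a\mapsto a^l$ a bijection modulo each $q_i$, so $m^l$ equidistributes on every finite factor $\Z/p_j\Z$ and hence on the odometer. This extra requirement is compatible with $q_i\equiv1\pmod q$ exactly because $q\nmid l$, and infinitely many such primes exist by Dirichlet precisely when $l$ is odd: for even $l$ the unavoidable factor $2\mid\gcd(l,q_i-1)$ would keep $m^l$ confined to quadratic residues. Combining this with the almost $1$-$1$ structure and regularity—the singular fibres have measure zero and, by the density estimate above, are met by $(\sigma^{m^l}y)$ with vanishing frequency for every starting point $y$—upgrades weak convergence to equidistribution with respect to the unique invariant measure, for all $y\in X_x$.

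The main obstacle is reconciling regularity ($\delta_j\to0$) with genuine divergence: in a regular system the aperiodic part is asymptotically negligible, and for \emph{prime} moduli a direct count shows the $k$-th powers meeting the holes have density at most $k\,\delta_j\to0$, which would force convergence. The device that defeats this is exactly the composite-modulus amplification, making the number of $k$-th roots grow like $d^{\,j}$ so that vanishing-density holes still capture a full-density set of $k$-th powers; executing this while keeping the oscillation amplitude bounded below and ensuring every coordinate is eventually decided (so that $x$ is genuinely Toeplitz) is the delicate bookkeeping I expect to be the crux. A second obstacle is the case $k\mid l$, where no modular hole set can separate $m^k$ from $m^l$ (every congruence property of $m^k$ is shared by $m^l$); there I anticipate needing the complementary archimedean fact that one power-orbit is a density-$0$ subset of the other, combining scale-dependent holes with this sparsity to localise the divergence to the $m^k$ orbit while leaving the $m^l$ averages convergent.
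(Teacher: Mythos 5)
Your treatment of the case $k\nmid l$ is essentially the paper's own argument (its Section 3): holes placed along power-residue classes, a hole-hitting ("sandwich") criterion for convergence (the paper's Lemma 4.1 / Theorem 2.5), divergence by parity-alternating filling of the positions $i^k$, $i<n_{t+1}^{1/k}$, and the equidistribution clause via $m\mapsto m^l$ being a permutation modulo every period in a regular system (the paper's Theorem 5.9). One caveat even here: your estimate that $m^l$ hits the holes with density $(\gcd(l,d)/d)^j$ is computed only for the unshifted orbit, whereas convergence for \emph{every} $y\in X_x$ requires the bound uniformly over shifts $m^l+a$. The shift-uniform bound goes through the multiplicity $\rho_l(p_j)=\prod_i\gcd(l,q_i-1)$, so when choosing the primes $q_i$ you must control $\gcd(l,q_i-1)$, not merely $\gcd(l,d)$; this is exactly what the paper's Lemma 3.2 arranges, and your sketch omits it.

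The genuine gap is the case $k\mid l$ (e.g. $k=2$, $l=4$), which the theorem covers and which you explicitly leave open with only an "anticipated" archimedean idea. Worse, your stated reason for deferring it --- that no modular hole set can separate $m^k$ from $m^l$ because every congruence property of $m^k$ is shared by $m^l$ --- is false, and refuting it is precisely the paper's key contribution (its Section 4). When $k\mid l$ the \emph{values} of $m^l$ are indeed contained among the values of $m^k$, but divergence does not require the holes to contain all $k$-th power values: it only requires them to capture a positive proportion of the positions $i^k$ for $i<n_{t+1}^{1/k}$, while every shifted copy of the holes is rarely hit by $l$-th powers. The paper builds such a set: $A=\{a\in((\Z/n\Z)^*)^k:\ a \text{ is a non-}l\text{-th-power residue modulo more than }\ln\omega(n)\text{ primes }p\mid n\}$. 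Since $(p-1)/l\leqslant\tfrac12(p-1)/k$, almost all $k$-th power residues lie in $A$, so $A$ still absorbs most positions $i^k$ (with multiplicity $k^{\omega(n)}$); and the Weil bound for character sums, applied to count solutions of $x^k-y^l=a$, gives $\lvert\{x:\ x^l\in A+i \bmod n\}\rvert\leqslant n\,(2/3)^{\ln\omega(n)}$ for \emph{every} shift $i$, which is what the convergence criterion needs. Your proposed substitute --- integer sparsity of $\{m^l\}$ inside $\{m^k\}$ --- cannot do this work: by your own sandwich lemma, both convergence and divergence reduce to counting hits of congruence classes modulo $n_t$ under arbitrary shifts, and sparsity of the orbits as subsets of $\Z$ gives no control over such shifted congruence counts. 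Without an argument of this Weil-bound type, your proposal proves the theorem only for $k\nmid l$.
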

This is the first known example of a minimal topological dynamical system $(X, T)$ where sequences of averages (\ref{avgconv}) converge for all $y\in X$ for $a_m=m^l$, but fail to converge for $a_m=m^k$, where $k, l>1$. 

The main difficulty in finding such an example is that there are only a few classes of systems where we have any control over averages along polynomials. Until recently, essentially the only known way of doing this relied on variants of the van der Corput inequality. This method can only be used in systems with some algebraic structure, such as nilsystems. Moreover, it does not seem to be able to distinguish one polynomial from another -- in all known cases where the method can be applied, it applies to all polynomials. 

A new approach relying on rigidity is developed in the article \cite{rigid}, resulting in particular in a class of weakly mixing systems where all square orbits are equidistributed. The method does not however give control over averages along polynomials of higher degree.

We are successful in considering Toeplitz systems in particular because they are quite simple systems -- one can think of them as limits of finite systems, where averages (\ref{avgconv}) can be very well understood by number-theoretic methods.\\

In Section \ref{main1} we construct the system in Theorem \ref{mainmain} in the case $k\nmid l$, adapting methods from \cite{Toeplitz}. One result needed to establish the last sentence of the theorem in this case is postponed to Section 5. 

The case $k|l$ is much more subtle -- we deal with it in Section \ref{main2}, using the Weil bound to estimate the number of solutions $(x, y)\in\mathbb{F}_p\times \mathbb{F}_p$ to equations of the form $x^k=y^l+a$.

In Section \ref{dense} we turn to discussing density of polynomial orbits in Toeplitz systems. Our Theorems \ref{dense} and \ref{resfull} essentially characterize all cases when such density occurs, in terms of the integers modulo which the polynomial is a permutation. In regular Toeplitz systems this characterization gives
\begin{Th}\label{denseequi}
Let $X_x$ be a regular Toeplitz system and let $P\in\Z[X]$. If the orbit $(\sigma^{P(n)}y)$ is dense in $X_x$ for some $y$, then the orbits $(\sigma^{P(n)}y)$ are equidistributed in $X_x$ for all $y\in X_x$.
\end{Th}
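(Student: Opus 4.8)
The plan is to route the argument through the maximal equicontinuous factor and to exploit the fact that a polynomial whose orbit is \emph{dense} in a Toeplitz structure must in fact be a permutation polynomial modulo every scale, which forces \emph{exact} equidistribution in residue classes. Throughout, write $\pi\colon X_x\to G$ for the factor map onto the maximal equicontinuous factor, where $G=\varprojlim_t \Z/p_t\Z$ is the odometer attached to the period structure $p_1\mid p_2\mid\cdots$ of $x$, so that $\pi\circ\sigma=R\circ\pi$ with $R$ the $+1$ rotation on $G$, and $\pi_\ast\mu$ is the Haar measure on $G$. Regularity enters through $\pi$ being almost one-to-one, with the density of the filled residues $A_t:=\Per_{p_t}(x)\subseteq\Z/p_t\Z$ satisfying $|A_t|/p_t\to 1$. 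My three steps are: (i) density forces a permutation condition; (ii) the permutation condition gives exact equidistribution in $G$; (iii) regularity lifts this to $X_x$, uniformly in $y$.

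For step (i), I would push the hypothesis forward by $\pi$: since $\pi$ is a continuous surjection, density of $(\sigma^{P(n)}y)$ in $X_x$ gives density of $(\pi(y)+P(n))_n$ in $G$, hence density of $(P(n)\bmod p_t)$ in $\Z/p_t\Z$ for every $t$. A self-map of a finite set with full image is a bijection, so $P$ is a permutation polynomial modulo $p_t$ for every $t$; this is exactly the density characterization recorded in Theorems~\ref{dense} and~\ref{resfull}, which I would cite rather than reprove. For step (ii), since $P$ has integer coefficients and permutes $\Z/p_t\Z$, the sequence $(P(n)\bmod p_t)_n$ hits each residue class exactly once on every block of $p_t$ consecutive integers — \emph{exact} equidistribution mod $p_t$, frequency precisely $1/p_t$ with no error term. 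As every character of $G$ factors through some $\Z/p_t\Z$, this yields equidistribution of $(P(n))_n$, and by invariance of Haar measure also of $(\pi(\sigma^{P(n)}y))_n=(\pi(y)+P(n))_n$, in $G$ for every $y\in X_x$.

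The crux is step (iii), and the main obstacle is precisely that equidistribution in the base $G$ does \emph{not} by itself survive to an almost one-to-one extension: a sparse orbit could in principle concentrate on the measure-zero set of fibres of size $>1$, and ruling this out is exactly where regularity must be used. I would test against a locally constant $f$ (dense among continuous functions by Stone--Weierstrass) depending on a window $W=[-M,M]$, write $g_t$ for the $t$-th coordinate of $\pi(y)$, and set $A_t^{(W)}:=\{a\in\Z/p_t\Z:\ a+w\in A_t\ \text{for all }w\in W\}$. For every residue $s\in g_t+A_t^{(W)}$ the value $f(\sigma^m y)$ with $m\equiv s\ (\mathrm{mod}\ p_t)$ is \emph{forced}, equal to a quantity $F_t(s-g_t)$ determined solely by the filled symbols $c_t$; by regularity the proportion of bad residues is at most $|W|\,(1-|A_t|/p_t)=o(1)$. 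Grouping the average $\frac1N\sum_{m\le N}f(\sigma^{P(m)}y)$ by $P(m)\bmod p_t$ and inserting the exact frequency $1/p_t$ from step (ii), I would obtain
\[
\frac1N\sum_{m\le N}f(\sigma^{P(m)}y)\ \longrightarrow\ \frac{1}{p_t}\sum_{a\in A_t^{(W)}}F_t(a)\ +\ O\!\big(\|f\|_\infty\, o(1)\big),
\]
where the reindexing $a=s-g_t$ has absorbed the alignment and the main term is \emph{independent of $y$}. Since $\mu$ assigns mass $1/p_t$ to each good level-$t$ cylinder and total mass $o(1)$ to the complement, this main term converges to $\int_{X_x}f\,d\mu$ as $t\to\infty$; letting $t\to\infty$ and then refining $f\to F$ gives $\frac1N\sum_{m\le N}F(\sigma^{P(m)}y)\to\int_{X_x}F\,d\mu$ uniformly in $y$, i.e.\ equidistribution for every $y$.

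Two features conspire to make step (iii) work, and they are what pin the statement to the regular class. The \emph{exactness} of step (ii) — frequency exactly $1/p_t$ — is what lets the reindexing $a=s-g_t$ erase the alignment $g_t$, so the limit cannot depend on $y$; with merely approximate frequencies the contribution of good versus bad residues could vary with $g_t$ and destroy uniformity. And \emph{regularity} is what makes the uncontrolled part of the average (the coordinates left undetermined by the period structure) carry vanishing density. For strictly ergodic but non-regular Toeplitz systems the latter fails, which is consistent with the contrasting behaviour established elsewhere in the paper.
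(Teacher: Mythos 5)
Your proof is correct, and while the underlying mechanism is the same as the paper's (permutation modulo the period structure gives exact residue frequencies; regularity makes the unforced positions negligible), your implementation is genuinely different. The paper gets Theorem \ref{denseequi} as a one-line corollary of two standalone results: Theorem \ref{resfull}, proved by a combinatorial lemma on essential periods and the sets $\Per_s^{(0)}$, shows a dense orbit along any sequence hits every residue class modulo every essential period; and Theorem \ref{permequi}, proved inside the viable-pair formalism, reindexes the polynomial average $\sum_{m<n_t}F(\sigma^{P(m)+r}x)$ into the Birkhoff average $\sum_{m<n_t}F(\sigma^{m}x)$ up to an error $O(?_t)$ using the permutation property, identifies the limit by unique ergodicity, and transports the conclusion to arbitrary $y\in X_x$ by uniformity in $r$ over the dense orbit of $x$. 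You instead route everything through the odometer factor $\pi\colon X_x\to G$: pushing the dense orbit forward through the continuous surjection $\pi$ gives surjectivity, hence bijectivity, of $P$ modulo each $p_t$ (a cleaner derivation than Theorem \ref{resfull}, and permutation modulo the $p_t$ is all you need); you then treat an arbitrary $y$ directly via the forced-window description of $y$ in terms of $\pi(y)$, the exact frequency $1/p_t$ of each residue class of $P(m)$, and $\mu\bigl(\pi^{-1}([c]_t)\bigr)=1/p_t$. Your route buys brevity and a transparent treatment of arbitrary $y$ (no approximation by points $\sigma^r x$, no essential periods), at the price of invoking the structure theory of Toeplitz systems --- the odometer factor, the forced-window (semicocycle) picture, and $\pi_\ast\mu$ being Haar --- as known inputs, which should be cited (e.g.\ \cite{Downarowicz}); the paper's route is self-contained and yields the more general intermediate results (Theorem \ref{resfull} holds for arbitrary sequences and is used elsewhere). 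One small point to tighten: your displayed convergence should be phrased as a $\limsup$/$\liminf$ estimate at fixed $t$, since the existence of the limit is only obtained after letting $t\to\infty$.
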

Hence any examples where the $(P(n))$-orbits are dense, but not equidistributed, cannot be regular. Let us mention that strict ergodicity of a Toeplitz system does not imply its regularity -- a suitable counterexample is given in \cite{iwanik}.

We adapt this example in Theorem \ref{iwacounter}, showing that regularity cannot be replaced by strict ergodicity in Theorem \ref{denseequi}.

\section{Preliminaries and facts}
\subsection{Toeplitz systems}\label{toep}
For simplicity we will only consider Toeplitz systems over the binary alphabet $\{0, 1\}$. We call $x\in \{0, 1\}^{\Z}$ a Toeplitz word if for every $a\in \Z$ there exists an $\ell\in\Z^+$ such that $x(a)=x({a+k\ell})$ for each $k\in\Z$. The orbit closure $X_x$ of $x$ in $\{0, 1\}^{\Z}$ is then a minimal topological dynamical system.
Every Toeplitz word $x$ admits a sequence $(n_t)$ satisfying $n_t|n_{t+1}$ such that if 
\[
{\rm Per}_{n_t}(x):=\{a\in\Z:\: x(a)=x(a+kn_t)\text{ for all }k\in\Z\},
\]

then \begin{equation}\label{max}\bigcup_{t\geq0}{\rm Per}_{n_t}(x)=\Z.\end{equation}
We call the system $X_x$ \textit{regular}, if the density of $\bigcup_{t=0}^M{\rm Per}_{n_t}(x)$ converges to $1$. This condition implies strict ergodicity (see Theorem 13.1 in \cite{Downarowicz}). \\

In \cite{Toeplitz} the authors present the following general way of constructing Toeplitz systems.
\begin{Def}
Let $(n_t)$ be a sequence of natural numbers satisfying $n_t|n_{t+1}$, and $x_t\in\{0, 1, ?\}^{n_t}$. We call the pair of sequences $(n_t), (x_t)$ \textit{viable} if 
\begin{itemize}
\item The word $x_{t+1}$ is a concatenation of $n_{t+1}/n_t$ copies of $x_t$, with possibly some question marks replaced with 0s or 1s.

(in other words, whenever $x_t(i)\ne ?$ for some $t\in\N$ and $i\in [0, n_t-1]$, then $x_s(j)=x_t(i)$ for all $s\geqslant t$ and $j\equiv i\pmod{n_t}$ satisfying $j\in [0, n_{s}-1]$)

\item for any $i\in\mathbb{N}$ there exists $t$ such that $x_t(i), x_t(n_t-i)\ne ?$
\end{itemize}

To such sequences $(n_t), (x_t)$ we associate the sequence
\[
?_t=\abs{\{i: x_t(i)=?\}\subset [0, n_t-1]\}}.
\]
\end{Def}

It is not difficult to see the following:
\begin{Fact}
Let $(n_t), (x_t)$ be viable. Then the word $x\in \{0, 1\}^\Z$ defined as
\[
x(i)=\begin{cases}x_t(i) \quad\text{for $i\geqslant 0$}\\ x_t(n_t+i) \quad\text{for $i<0$} \end{cases}
\]
is Toeplitz (here $t$ is the smallest index for which the relevant symbol is not "?"). All Toeplitz words can be obtained this way. Additionally, if $?_t=o(n_t)$, then $X_x$ is regular.
\end{Fact}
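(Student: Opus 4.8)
The plan is to verify the three assertions in turn; the crux of all of them is that the concatenation/refinement condition makes the sequence $(x_t)$ \emph{coherent}, so that the defining formula produces a single well-defined and mutually consistent letter at every position of $\Z$.

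First I would check that $x$ is well-defined and Toeplitz. For $i\geq 0$ the second viability condition supplies a level $t$ with $x_t(i)\ne ?$; for $i<0$, writing $i=-j$ with $j>0$, the same condition gives $t$ with $x_t(n_t-j)=x_t(n_t+i)\ne ?$, which is exactly the symbol used in the formula. So $x(i)$ is defined for every $i$. The key lemma is coherence: if $x_t(a)\ne ?$ with $a\in[0,n_t-1]$, then $x(b)=x_t(a)$ for every $b\equiv a\pmod{n_t}$. Indeed, for $b\geq 0$ pick $s\geq t$ with $b<n_s$ and apply the refinement condition with $j=b$; for $b<0$ apply it to $j=n_s+b$, using $n_t\mid n_s$ so that $n_s+b\equiv a\pmod{n_t}$. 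In particular the letter read off at a given position does not depend on the level chosen, so $x$ is genuinely well-defined. The Toeplitz property is then immediate: given $a\in\Z$, choose $t$ with $x_t(a\bmod n_t)\ne ?$ and set $\ell=n_t$; coherence yields $x(a+k\ell)=x(a)$ for all $k\in\Z$.

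Second, for the converse I would start from an arbitrary Toeplitz word $x$ together with a period structure $(n_t)$ satisfying $n_t\mid n_{t+1}$ and $\bigcup_t{\rm Per}_{n_t}(x)=\Z$, as recorded above. Define $x_t\in\{0,1,?\}^{n_t}$ by $x_t(i)=x(i)$ when $i\in{\rm Per}_{n_t}(x)$ and $x_t(i)=?$ otherwise. Since $n_t\mid n_{t+1}$ forces ${\rm Per}_{n_t}(x)\subseteq{\rm Per}_{n_{t+1}}(x)$, a non-$?$ letter of $x_t$ propagates correctly to all congruent positions of $x_{t+1}$, giving the concatenation condition; the hypothesis $\bigcup_t{\rm Per}_{n_t}(x)=\Z$ (applied to both $i$ and $-i$) gives the second viability condition; and by construction the associated word is $x$ itself.

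Third, for regularity I would use the observation that every non-$?$ position of $x_t$ lies in ${\rm Per}_{n_t}(x)$ — this is precisely coherence from the first part, since $x_t(i)\ne ?$ forces $x(i)=x(i+kn_t)$ for all $k$. Hence $|{\rm Per}_{n_t}(x)\cap[0,n_t-1]|\geq n_t-?_t$, so ${\rm Per}_{n_t}(x)$ has density at least $1-?_t/n_t$. Because the sets ${\rm Per}_{n_t}(x)$ are nested, $\bigcup_{t=0}^M{\rm Per}_{n_t}(x)={\rm Per}_{n_M}(x)$ has density at least $1-?_M/n_M$, which tends to $1$ when $?_t=o(n_t)$; this is regularity. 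The only genuinely delicate point in the whole argument is the bookkeeping behind coherence — checking that the assigned letter is independent of the level and that the shift by $n_t$ for negative indices is compatible across levels via $n_t\mid n_s$ — and once that is in place the Toeplitz property, the converse, and the density estimate are all routine.
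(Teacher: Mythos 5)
Your proposal is correct and complete: the paper states this Fact without proof (``It is not difficult to see the following''), and your argument supplies exactly the intended routine verification --- the coherence lemma derived from the refinement condition, the correspondence $x_t(i)\ne ?\iff i\in{\rm Per}_{n_t}(x)$ for the converse, and the observation that non-$?$ positions of $x_t$ give full residue classes in ${\rm Per}_{n_t}(x)$ for the density bound. The only point worth making explicit in a write-up is the one you flag yourself: that the letter assigned at a position is independent of the level, which follows by comparing any two levels through a common higher level via the refinement condition.
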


\begin{Def}
Fix an integer $k>1$.
For all $N,n, a\in\Z$, let
\[\rho_k(N;n,a)=\abs{\{1\leq m\leq N:m^k=a\!\mod n\}}.\]
and
\[\rho_k(n):=\max_{a\in \Z}\rho_k(n;n,a).\]
\end{Def}
We have
\begin{Th}[\cite{Toeplitz}, Theorem 6.8]\label{thm:posP}
Suppose that $X_x$ is a Toeplitz system such that
\[
?_t=o(n_t/{\rho_k(n_t)}).
\]
Then, for every continuous map $F:X_x\to\C$ and $y\in X_x$,  the limit
\[
\lim_{N\to\infty}\frac{1}{N}\sum_{m\leq N}F(\sigma^{m^k}y)
\]
exists.
\end{Th}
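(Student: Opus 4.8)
The plan is to reduce to functions depending on only finitely many coordinates and then to exploit the fact that, away from a small set of residue classes modulo $n_t$, the values of any $y\in X_x$ are determined periodically at level $t$; the hypothesis on $?_t$ will guarantee that the undetermined classes contribute a vanishing error. First I would reduce the problem to cylinder functions. Writing $A_N(F,y):=\frac1N\sum_{m\le N}F(\sigma^{m^k}y)$, we have $|A_N(F,y)|\le\|F\|_\infty$, and the algebra of functions depending on only finitely many coordinates is dense in $C(X_x)$ by Stone--Weierstrass (coordinate functions separate points of $\{0,1\}^\Z$). A standard $3\varepsilon$-argument then shows that if $(A_N(F,y))_N$ is Cauchy for every such $F$, it is Cauchy, hence convergent, for every $F\in C(X_x)$. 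So fix $y\in X_x$ and assume $F$ depends only on the coordinates in a window $[-R,R]$, with $\|F\|_\infty\le 1$.

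Next comes the level-$t$ approximation. The system $X_x$ has the $(n_t)$-odometer $\varprojlim\Z/n_t\Z$ as a factor, so attached to $y$ is an address $(c_t)$, $c_t\in\Z/n_t\Z$, with the structural property that $y(a)=x_t\big((a+c_t)\bmod n_t\big)$ whenever the symbol on the right is not a question mark. Call a residue $r\in\Z/n_t\Z$ \emph{bad} if $x_t\big((r+c_t+j)\bmod n_t\big)=\,?$ for some $j\in[-R,R]$, and let $B_t$ be the set of bad residues. Each of the $?_t$ question-mark positions of $x_t$ makes at most $2R+1$ residues bad, so $|B_t|\le(2R+1)?_t$. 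For every good residue $r\notin B_t$ and every $m$ with $m^k\equiv r\pmod{n_t}$, the whole window is determined by $r$, so $F(\sigma^{m^k}y)$ equals a value $v_t(r)$ depending only on $r$.

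Now the main estimate. Grouping $m\le N$ by $r=m^k\bmod n_t$ and separating good from bad residues gives
\[
A_N(F,y)=\frac1N\sum_{r\notin B_t}v_t(r)\,\rho_k(N;n_t,r)+E_N,\qquad |E_N|\le\frac1N\sum_{r\in B_t}\rho_k(N;n_t,r).
\]
For $t$ fixed, as $N\to\infty$ the first sum is finite and $\rho_k(N;n_t,r)/N\to\rho_k(n_t;n_t,r)/n_t$, so it converges to some $L_t$; meanwhile, using $\rho_k(N;n_t,r)\le\lceil N/n_t\rceil\,\rho_k(n_t)$,
\[
\limsup_{N\to\infty}|E_N|\le\frac{|B_t|\,\rho_k(n_t)}{n_t}\le\frac{(2R+1)\,?_t\,\rho_k(n_t)}{n_t}.
\]
Consequently $\limsup_N A_N(F,y)-\liminf_N A_N(F,y)\le 2(2R+1)\,?_t\,\rho_k(n_t)/n_t$. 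Since the left-hand side is independent of $t$, letting $t\to\infty$ and invoking the hypothesis $?_t=o\big(n_t/\rho_k(n_t)\big)$ forces it to be $0$, so the limit exists.

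The crux of the argument, and the only place the hypothesis is used, is the error bound: controlling the bad-residue contribution uniformly in $N$. This requires both the combinatorial bound $|B_t|\le(2R+1)?_t$ on the number of undetermined residue classes and the arithmetic bound limiting how often $m^k$ can land in a single class modulo $n_t$, namely $\rho_k(N;n_t,r)\le\lceil N/n_t\rceil\,\rho_k(n_t)$; their product vanishes precisely because $?_t=o(n_t/\rho_k(n_t))$. Setting up the level-$t$ determinacy statement (the address $(c_t)$ and the induced values $v_t$) is the other step needing care, but it is a structural feature of Toeplitz systems rather than a genuine difficulty.
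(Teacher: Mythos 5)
Your proof is correct, and its arithmetic core coincides with the paper's: group the times $m\leq N$ according to $m^k \bmod n_t$, use the $n_t$-periodicity of the determined symbols of $x_t$, and control the undetermined contribution by the product of $(2R+1)?_t$ with the multiplicity bound $\rho_k(N;n_t,r)\leqslant\lceil N/n_t\rceil\rho_k(n_t)$, which is exactly where the hypothesis $?_t=o(n_t/\rho_k(n_t))$ enters. (The paper does not reprove this cited theorem, but its Lemma \ref{convcond} reproduces the argument of \cite{Toeplitz} in the form you would need: your bad-residue count shows precisely that $?_t=o(n_t/\rho_k(n_t))$ implies the condition (\ref{shiftpow}) for $P(m)=m^k$.) You do, however, route two steps differently. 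First, to handle an arbitrary $y\in X_x$ the paper never invokes the odometer: it proves the Cauchy estimate (\ref{ineq}) at the base point $x$ shifted by an arbitrary $r\in\Z$, uniformly in $r$, and remarks that this suffices because every $y$ is a coordinatewise limit of shifts of $x$ and each average is a finite sum of values of a continuous function. You instead appeal to the structural fact that every $y\in X_x$ carries an address $(c_t)$ with $y(a)=x_t\bigl((a+c_t)\bmod n_t\bigr)$ whenever the right-hand side is not a question mark; this is true, but it is a genuine structure theorem for Toeplitz orbit closures (cf. \cite{Downarowicz}) and should be cited or proved, whereas the paper's shift-uniformity device is self-contained. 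Second, the paper compares the scale-$N$ average with the single-period average over $[0,n_t)$ to obtain a two-scale Cauchy estimate, with the side benefit that the convergence is uniform in $y$; you instead fix $t$, identify the limit $L_t$ of the good part as $N\to\infty$, and bound the oscillation $\limsup_N-\liminf_N$ by the bad part, which proves existence of the limit (all the statement asks) but not uniformity. Both routes are valid; yours trades the paper's elementary limit-of-shifts reduction for a cleaner pointwise argument at the cost of importing the Toeplitz structure theory.
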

We will also need the following
\begin{Def}
We denote
\[R^k_n=\{0\leqslant a<n: a\equiv m^k\pmod{n} \text{ for some $m\in\Z$}\}\]
and

\[\widetilde{R^k_n}=\{a\in R^k_n: \gcd(a, n)=1\}.\]
\end{Def}

We now briefly describe the method used in \cite{iwanik} to construct a non-regular strictly ergodic Toeplitz system. As opposed to the previous construction, it can detect strict ergodicity. We will use it in the proof of Theorem \ref{iwacounter}.

Let $(n_t)$ be a sequence of positive integers such that $n_t|n_{t+1}$, and let $W_t\subset \{0, 1\}^{n_t}$ satisfy the following:
\begin{enumerate}
\item each word in $W_{t+1}$ is a concatenation of $n_{t+1}/n_t$ words from $W_t$
\item the word $\omega[0, n_t-1]\in W_t$ is the same for all $\omega\in W_{t+1}$
\item the word $\omega[n_{t+1}-n_t, n_{t+1}-1]\in W_t$ is the same for all $\omega\in W_{t+1}$
\end{enumerate}
Then we can pick any $\omega_t\in W_t$ and let $x\in\{0, 1\}^{\Z}$ be given by
\[
x(i)=\begin{cases}\omega_t(i)\quad &\text{for $i\geqslant 0$}\\ \omega_t(n_t+i)\quad &\text{for $i<0$}\end{cases},
\]
where $t$ satisfies $|i|<n_t$.
Notice that by conditions (2) and (3) the sequences $(\omega_t(i))$ and $(\omega_t(n_t+i))$ respectively are constant wherever they are defined.

It is not difficult to see the following:
\begin{Fact}\label{iwatoep}
The word $x$ constructed above is Toeplitz. Any Toeplitz word can be obtained this way.
\end{Fact}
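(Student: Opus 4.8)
The plan is to prove both assertions by exploiting the block structure imposed by conditions (1)--(3). Write $m_t = n_{t+1}/n_t$ and, for each $t$, let $u_t \in W_t$ denote the common first length-$n_t$ subword and $v_t \in W_t$ the common last length-$n_t$ subword shared by all words of $W_{t+1}$ (these exist by (2) and (3)). Iterating (1), every word of $W_{t'}$ with $t' > t+1$ is a concatenation of words from $W_{t+1}$ along the grid of level-$(t+1)$ blocks $[jn_{t+1}, (j+1)n_{t+1})$; in particular each such block of $\omega_{t'}$ carries a word of $W_{t+1}$, whose first $n_t$ symbols equal $u_t$ and whose last $n_t$ symbols equal $v_t$. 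Well-definedness of $x$ is exactly the constancy already noted after the construction.

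First I would show $x$ is Toeplitz. Given $a \in \Z$, choose $t$ with $|a| < n_t$ and claim that $n_{t+1}$ is a period for position $a$. Indeed, for every $k$ the argument $a + kn_{t+1}$ sits at the same offset inside a level-$(t+1)$ block of some $\omega_{t'}$ (taking $t'$ large and reading $x$ through the defining case determined by the sign of $a+kn_{t+1}$): if $a \geq 0$ this offset lies in the first $n_t$ coordinates of the block, where the content is $u_t$, so $x(a+kn_{t+1}) = u_t(a)$ for all $k$; if $a < 0$ the offset lies in the last $n_t$ coordinates, where the content is $v_t$, so $x(a + kn_{t+1}) = v_t(a + n_t)$ for all $k$. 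Either way the value is independent of $k$ and equals $x(a)$ (case $k=0$), proving the period.

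For the converse, given a Toeplitz word $y$ I would build the data realizing it. I construct $(n_t)$ inductively: having $n_t$, use that $y$ is Toeplitz to pick a multiple $n_{t+1}$ of $n_t$ with $n_{t+1} > n_t$ and with every position in $\{-n_t, \ldots, n_t - 1\}$ lying in $\Per_{n_{t+1}}(y)$ --- possible since each of these finitely many positions has some period, and $\Per_\ell(y) \subseteq \Per_{\ell'}(y)$ whenever $\ell \mid \ell'$, so one takes $n_{t+1}$ to be a common multiple (via $\lcm$) of $n_t$ and these periods. Then set $W_t = \{\, y[jn_t, (j+1)n_t - 1] : j \in \Z \,\}$. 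Condition (1) is immediate from $n_t \mid n_{t+1}$; conditions (2) and (3) hold because the chosen periodicity forces the first, respectively last, $n_t$ symbols of every level-$(t+1)$ block of $y$ to coincide with $y[0, n_t - 1]$, respectively $y[-n_t, -1]$. Choosing $\omega_t = y[0, n_t - 1]$, the construction returns $y$: for $i \geq 0$ one has $\omega_{t'}(i) = y(i)$ directly, and for $i < 0$ the equality $\omega_{t'}(n_{t'} + i) = y(i)$ holds for all large $t'$ since $i \in \Per_{n_{t'}}(y)$ eventually; the latter also yields $\bigcup_t \Per_{n_t}(y) = \Z$, as required of a Toeplitz word.

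I expect the main obstacle to be the converse, and specifically arranging conditions (2) and (3) simultaneously: one must choose the periods $n_{t+1}$ so that a whole window of radius $n_t$ about the origin becomes $n_{t+1}$-periodic, and then verify that the single fixed prefix $u_t$ and suffix $v_t$ produced this way are consistent with the block decomposition (no clash at the seams between consecutive super-blocks) and that the resulting $x$ reproduces $y$ on the negative axis through the shift-by-$n_{t'}$ convention. The positive axis and the forward Toeplitz estimate are routine; the bookkeeping for negative indices and for the exhaustion $\bigcup_t \Per_{n_t}(y) = \Z$ is where care is needed.
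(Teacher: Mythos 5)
The paper offers no proof of this Fact at all---it is stated with ``It is not difficult to see the following''---so there is nothing to compare against except the intended routine verification, and your argument is a correct version of it: the forward direction via the observation that any position $a$ with $|a|<n_t$ lands, modulo $n_{t+1}$, either in the common prefix $u_t$ or the common suffix $v_t$ of the level-$(t+1)$ blocks, and the converse via $W_t=\{y[jn_t,(j+1)n_t-1]:j\in\Z\}$ with $n_{t+1}$ chosen as a common multiple of $n_t$ and periods of all positions in $[-n_t,n_t-1]$. One subtlety you handle correctly (and which the paper's own phrasing glosses over): with $\omega_t\in W_t$ chosen arbitrarily, the displayed formula for $x(i)$ is only unambiguous if one reads the value at \emph{sufficiently large} levels $t$ (the sequence $(\omega_t(i))_t$ stabilizes from the second valid index on, but the first term need not agree), and your convention of evaluating through large $t'$ is exactly the right fix; in the converse your choice $\omega_t=y[0,n_t-1]$ even removes the ambiguity on the nonnegative axis. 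The only cosmetic gap is that an $\lcm$ of $n_t$ and the relevant periods need not exceed $n_t$, so one should multiply by an extra factor to force $n_t\to\infty$ (needed so that every $i$ satisfies $|i|<n_t$ for some $t$); this is trivial to arrange since periodicity is inherited by multiples.
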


Let $B\in\{0, 1\}^\beta$ and $C\in\{0, 1\}^{\gamma}$, where $\beta|\gamma$. The word $C$ can be split into $\gamma/\beta$ words of length $\beta$; by $\ap(B, C)$ we denote the frequency of appearances of $B$ among these words. To be precise,
\[
\ap(B, C)=\frac{\beta}{\gamma}\abs{\{i\in [0, \gamma/\beta-1]: C[i\beta, (i+1)\beta -1]=B\}}.
\]
\begin{Th}[\cite{iwanik}, Theorem 1.1]\label{unierg}
In the above notation, the system $X_x$ is strictly ergodic if and only if for any $s$ and any $B\in W_s$ there exists a number $\nu(B)$ such that
\[
\ap(B, C)\to \nu(B)
\]
uniformly in $C\in W_t$ as $t\to\infty$.
\end{Th}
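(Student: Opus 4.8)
The plan is to establish both implications through the standard criterion for unique ergodicity together with the tower structure of the words $W_t$. Since $x$ is Toeplitz, the system $X_x$ is automatically minimal, so strict ergodicity is equivalent to unique ergodicity, which holds if and only if, for every continuous $F\colon X_x\to\C$, the averages $\frac1N\sum_{n<N}F(\sigma^n y)$ converge uniformly in $y$ to a constant. By Stone--Weierstrass the indicators $\raz_{[D]}$ of cylinder sets $[D]=\{y:\ y[0,|D|-1]=D\}$ span a dense subalgebra of $C(X_x)$, so it suffices to prove uniform convergence to a constant for these. The device linking the combinatorial quantity $\ap$ to genuine averages is the odometer factor: let $\pi\colon X_x\to G:=\varprojlim_t\Z/n_t\Z$ be the canonical continuous factor map with $\pi(x)=0$, and for $B\in W_s$ put
\[
A_B=\{y\in X_x:\ \pi(y)\equiv 0\ (\mathrm{mod}\ n_s),\ y[0,n_s-1]=B\}.
\]
Then $A_B$ is clopen and $\raz_{A_B}$ is continuous, and $\sigma^n y$ lies in $A_B$ precisely when $n$ is a level-$s$ block start of $y$ and the corresponding level-$s$ block equals $B$.

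For the forward implication, suppose $X_x$ is uniquely ergodic with invariant measure $\mu$. Fix $B\in W_s$; for $t>s$ and $C\in W_t$, minimality provides $y\in X_x$ with $\pi(y)\equiv 0\ (\mathrm{mod}\ n_t)$ and $y[0,n_t-1]=C$, so that the level-$s$ block starts in $[0,n_t)$ are $0,n_s,\dots,(n_t/n_s-1)n_s$ and counting visits to $A_B$ gives
\[
\frac1{n_t}\sum_{n<n_t}\raz_{A_B}(\sigma^n y)=\frac1{n_s}\,\ap(B,C).
\]
Unique ergodicity applied to the continuous function $\raz_{A_B}$ yields $\frac1N\sum_{n<N}\raz_{A_B}(\sigma^n y)\to\mu(A_B)$ uniformly in $y$; evaluating at $N=n_t$ on the points above and letting $t\to\infty$ gives $\ap(B,C)\to n_s\mu(A_B)=:\nu(B)$ uniformly over $C\in W_t$, as required.

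For the converse, assume $\ap(B,C)\to\nu(B)$ uniformly and prove unique ergodicity by a two-scale argument. Fix a cylinder word $D$ of length $d$ and $\eps>0$, and choose $s$ with $n_s>d/\eps$. In a window $[0,N)$ of an arbitrary $y$, the occurrences of $D$ that straddle a level-$s$ block boundary number at most $(d-1)(N/n_s+1)$ and hence contribute frequency below $\eps+O(1/N)$, uniformly in $y$. The remaining internal occurrences inside a level-$s$ block equal to $B$ number a constant $c_B(D)$, so their frequency equals $\frac1{n_s}\sum_{B\in W_s}g_B(y,N)c_B(D)$, where $g_B(y,N)$ is the proportion of level-$s$ blocks in the window equal to $B$. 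Covering $[0,N)$ by level-$t$ blocks, with $t$ large enough that $\sup_{C\in W_t}|\ap(B,C)-\nu(B)|$ is small for every $B\in W_s$, and discarding the two partial end blocks (a fraction $O(n_t/N)$ of the window), shows $g_B(y,N)\to\nu(B)$ uniformly in $y$. Consequently the averages of $\raz_{[D]}$ eventually lie within $\eps+o(1)$ of the constant $\frac1{n_s}\sum_{B}\nu(B)c_B(D)$; as $\eps\to0$ these constants form a Cauchy sequence and the averages converge uniformly to a single limit, which is unique ergodicity.

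The step I expect to be the main obstacle is the forward implication, where the grid-aligned combinatorial quantity $\ap(B,C)$ must be recognised as an honest Birkhoff average of a continuous function. This hinges on two points to be treated carefully: that the level-$s$ block-start (phase) function is continuous rather than merely measurable, which is what I would extract from the odometer factor $\pi$ and which is exactly what makes $A_B$ clopen; and that every $C\in W_t$ is genuinely realised as an aligned window of a point of $X_x$, so that the uniform estimate from unique ergodicity covers all of $W_t$. The converse is comparatively routine once the straddling occurrences have been separated and the uniform convergence of $\ap$ has been transported to the finer scale through the relation at scale $t\gg s$.
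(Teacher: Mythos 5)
This statement is quoted from \cite{iwanik} and the paper contains no proof of it, so there is nothing in the paper to compare your argument against; I am judging it on its own merits. Your route --- realising $\ap(B,\cdot)$ as a Birkhoff average of the indicator of a clopen ``aligned cylinder'' over the odometer factor for the forward direction, and a two-scale block count for the converse --- is the natural one and close in spirit to the original argument of Iwanik--Lacroix. The converse half is essentially sound, and it can even be freed from the factor map $\pi$: for a fixed $y$ you only need \emph{some} phase compatible with all scales, which a diagonal argument applied to shifts $\sigma^{m_i}x\to y$ supplies, together with the (easy) fact that every grid-aligned $n_t$-block of any point of $X_x$ belongs to $W_t$.

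The forward implication, however, has genuine gaps, located exactly at the two points you flagged and then discharged by assertion rather than proof. First, in the framework of this paper a continuous equivariant map $\pi\colon X_x\to\varprojlim_t\Z/n_t\Z$ need not exist: it does exist when $(n_t)$ is the period structure of $x$ (the almost one-to-one extension theorem for Toeplitz flows, cf. \cite{Downarowicz}), but nothing in conditions (1)--(3) forces the essential periods of $x$ to exhaust the $n_t$. For instance $W_t=\{(01)^{n_t/2}\}$ satisfies (1)--(3) and produces the $2$-periodic word $x$, whose two-point orbit closure admits no equivariant map onto the infinite odometer; your sets $A_B$ then simply do not exist, so calling $\pi$ ``canonical'' does not create it. Second, minimality does not provide, for every $C\in W_t$, a point $y$ with $\pi(y)\equiv0\pmod{n_t}$ and $y[0,n_t-1]=C$: minimality only gives access to words occurring in $x$, and condition (1) does not force every word of $W_t$ to be used inside words of $W_{t+1}$, so $W_t$ may contain words that never occur in $x$ at all. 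With such unused words the only-if direction is in fact false as literally stated (strict ergodicity of $X_x$ constrains nothing about $\ap(B,C)$ for a $C$ absent from $x$; one can attach a parallel family of ``junk'' words with deviant $B$-statistics, respecting (1)--(3), without changing $X_x$). Both defects disappear under the hypotheses of \cite{iwanik}, where $W_t$ is by definition the set of grid-aligned $n_t$-blocks of $x$ and $(n_t)$ is its period structure; the honest completion of your proof is to state and work under those hypotheses (or prove the reduction to them), not to extract them from minimality.
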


\subsection{Permutative polynomials}

Due to the conclusion of Corollary (\ref{fullset}), we will consider the following:
\begin{Def}
We say that a polynomial $P\in \Z[X]$ is a \textit{permutation modulo $n$} if $P:\Z/n\Z\to\Z/n\Z$ is a bijection. We say $P$ is \textit{permutative} if it is a permutation modulo $n$ for infinitely many $n\in\Z^+$.
\end{Def}
\begin{Example}
For $k\in\Z^+$ the polynomial $P(x)=x^k$ is permutative if and only if $2\nmid k$.
\end{Example}

By the Chinese remainder theorem, permutativity of $P$ is equivalent to the statement that either $P$ is bijective modulo infinitely many primes, or that for some prime $p$ the polynomial $P$ is bijective modulo all $p^k$ for $k\in\Z^+$.
The second condition can be stated in a more concrete way: (for a proof see e.g. Corollary 4.3 in \cite{dickson})
\begin{Th}
Let $P\in\Z[X]$ and let $p$ be prime. The following conditions are equivalent:
\begin{enumerate}
\item $P$ is a permutation modulo $p^k$ for every $k\in\Z^+$.
\item $P$ is a permutation modulo $p^2$
\item $P$ is a permutation modulo $p$, and $p\nmid P'(x)$ for all $x\in \Z$.
\end{enumerate}
\end{Th}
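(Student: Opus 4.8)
The plan is to close the cycle of implications $(1)\Rightarrow(2)\Rightarrow(3)\Rightarrow(1)$, with the main engine being the first-order Taylor expansion of an integer polynomial: for any $a,h\in\Z$ one has $P(a+h)\equiv P(a)+P'(a)h\pmod{h^2}$, since every higher Taylor coefficient $P^{(j)}(a)/j!$ is an integer and is multiplied by $h^j$ with $j\geq 2$. The implication $(1)\Rightarrow(2)$ is immediate, being the special case $k=2$.

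For $(2)\Rightarrow(3)$, I would first observe that a permutation modulo $p^2$ reduces to a permutation modulo $p$. Indeed, writing $\pi\colon\Z/p^2\Z\to\Z/p\Z$ for the reduction, the map $\pi\circ P$ is surjective (as $P$ is bijective and $\pi$ is onto), while the compatibility of $P$ with reduction gives $\pi\circ P=\bar P\circ\pi$, where $\bar P\colon\Z/p\Z\to\Z/p\Z$ is $P$ modulo $p$. Hence $\bar P(\Z/p\Z)=\Z/p\Z$, so $\bar P$ is surjective and therefore bijective. To get the derivative condition, suppose toward a contradiction that $p\mid P'(a)$ for some $a$. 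Applying the Taylor expansion with $h=p$ gives $P(a+p)\equiv P(a)+pP'(a)\equiv P(a)\pmod{p^2}$, while $a+p\not\equiv a\pmod{p^2}$; this violates injectivity modulo $p^2$.

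The heart of the argument is $(3)\Rightarrow(1)$, which I would prove by induction on $k$, lifting bijectivity from $p^k$ to $p^{k+1}$ (the base case $k=1$ is the hypothesis). Assuming $P$ is bijective modulo $p^k$, it suffices — source and target being finite of equal size $p^{k+1}$ — to prove injectivity modulo $p^{k+1}$. If $P(a)\equiv P(b)\pmod{p^{k+1}}$, then reducing modulo $p^k$ and invoking the inductive hypothesis gives $b\equiv a\pmod{p^k}$, say $b=a+tp^k$ with $0\le t<p$. The Taylor expansion with $h=tp^k$ yields $P(b)\equiv P(a)+tp^kP'(a)\pmod{p^{2k}}$, and since $2k\ge k+1$ this congruence persists modulo $p^{k+1}$. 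Therefore $tp^kP'(a)\equiv 0\pmod{p^{k+1}}$, i.e. $tP'(a)\equiv 0\pmod p$; as $p\nmid P'(a)$ by hypothesis, we get $t\equiv 0\pmod p$, whence $t=0$ and $a=b$.

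I expect the only delicate points to be bookkeeping in the lifting step: ensuring the inequality $2k\ge k+1$ so that the quadratic Taylor error is absorbed modulo $p^{k+1}$, and recognizing that the hypothesis $p\nmid P'(x)$, although stated for all $x\in\Z$, depends only on $x$ modulo $p$ (since $P'$ has integer coefficients), so that it is both finitely checkable and precisely the nonvanishing needed to force $t=0$.
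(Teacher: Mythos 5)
Your proof is correct, and it is worth noting that the paper itself gives no proof of this statement at all: it simply points the reader to Corollary 4.3 of the Lidl--Mullen--Turnwald monograph on Dickson polynomials. So there is nothing in the paper to match your argument against step by step; what you have supplied is a self-contained replacement for that citation. Your argument is the standard Hensel-lifting proof, and all three implications check out: the integrality of the Taylor coefficients $P^{(j)}(a)/j!$ (they are the coefficients of $P(X+a)\in\Z[X]$) justifies the congruence $P(a+h)\equiv P(a)+P'(a)h\pmod{h^2}$; the contradiction in $(2)\Rightarrow(3)$ is sound because $a+p\not\equiv a\pmod{p^2}$ while $pP'(a)\equiv 0\pmod{p^2}$ when $p\mid P'(a)$; and in the lifting step $(3)\Rightarrow(1)$ the reduction from modulus $h^2=t^2p^{2k}$ to $p^{2k}$ and then to $p^{k+1}$ (using $2k\geq k+1$) is exactly the bookkeeping needed, after which $p\nmid P'(a)$ forces $t\equiv 0\pmod p$. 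Your closing remark that the hypothesis $p\nmid P'(x)$ only depends on $x$ bmod $p$ is also correct and is what makes condition (3) finitely checkable, though the proof itself never needs more than its literal statement. The only trade-off relative to the paper's choice is one of economy: the citation keeps the paper short, while your two-paragraph argument makes the equivalence elementary and self-contained, which is arguably preferable for a statement this basic.
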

A description of polynomials which are permutations modulo infinitely many primes is much more difficult. It relies on the following object:
\begin{Def}
For $n\in\Z^+$ and $\alpha\in\Z$ we let the \textit{Dickins polynomial} $D_n(\alpha, x)$ be the unique polynomial in $\Z[X]$ of degree $n$ satisfying 
\[
D_n\left(\alpha, x+\frac{\alpha}{x}\right)=x^n+\left(\frac{\alpha}{x}\right)^n
\]
for all $x\in\Z$.
\end{Def}

\begin{Th}[Theorem 4, \cite{turnwald}]
Let $P\in\Z[X]$ be of degree $n$. Then the following conditions are equivalent:
\begin{enumerate}
\item $P$ is a permutation modulo infinitely many primes
\item $P$ is a composition of finitely many degree 1 polynomials in $\Q[X]$ and Dickins polynomials $D_{n_i}(\alpha_i, x)$ for (not neccessarily distinct) odd primes $n_i$ and numbers $\alpha_i\in\Z$, where whenever $3|n_i$ we have $\alpha_i=0$.
\end{enumerate}
\end{Th}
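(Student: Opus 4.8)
The plan is to prove the two implications separately, with $(2)\Rightarrow(1)$ being elementary and $(1)\Rightarrow(2)$ being the substantial direction, which is essentially Schur's conjecture. For $(2)\Rightarrow(1)$, I would first record the classical permutation criteria over a finite field: a linear polynomial in $\Q[X]$ reduces to a permutation of $\mathbb{F}_p$ for all but finitely many $p$; the Dickson polynomial $D_n(\alpha, x)$ with $\alpha\neq 0$ permutes $\mathbb{F}_p$ precisely when $\gcd(n, p^2-1)=1$, while $D_n(0, x)=x^n$ permutes $\mathbb{F}_p$ precisely when $\gcd(n, p-1)=1$. Since a composition of permutations of $\mathbb{F}_p$ is again a permutation, and reduction mod $p$ commutes with composition for $p$ avoiding the finitely many denominators, it suffices to exhibit infinitely many $p$ for which every factor permutes simultaneously. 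Each odd prime $n_i$ imposes a congruence condition on $p$ modulo $n_i$: namely $p\not\equiv\pm 1$ when $\alpha_i\neq 0$, and $p\not\equiv 1$ when $\alpha_i=0$. These are simultaneously satisfiable, and here the hypothesis that $\alpha_i=0$ whenever $3\mid n_i$ is exactly what is needed: for $n_i=3$ the condition $\gcd(3, p^2-1)=1$ has no solutions, whereas $\gcd(3, p-1)=1$ leaves the class $p\equiv 2\pmod 3$. By the Chinese remainder theorem the admissible residues modulo $\lcm_i n_i$ form a nonempty set, and Dirichlet's theorem then supplies infinitely many primes in each such class, proving $(1)$.

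For $(1)\Rightarrow(2)$ I would pass to the geometric reformulation via the polynomial $\phi(X, Y)=\frac{P(X)-P(Y)}{X-Y}\in\Z[X, Y]$. If $\phi$ had an absolutely irreducible factor defined over $\mathbb{F}_p$, the Weil bound (or Lang--Weil) would guarantee $p+O(\sqrt p)$ of its $\mathbb{F}_p$-points, hence for large $p$ a point off the diagonal, giving $x\neq y$ with $P(x)=P(y)$ and so $P$ not injective on $\mathbb{F}_p$. A reduction-mod-$p$ argument together with the Chebotarev density theorem yields the converse and transfers the condition to $\Q$: $P$ permutes $\mathbb{F}_p$ for infinitely many $p$ if and only if $\phi$ has no absolutely irreducible factor over $\Q$, i.e. $P$ is \emph{exceptional} over $\Q$. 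Next I would reduce to the case where $P$ is functionally indecomposable: on a finite set a composite bijection has bijective constituents, so exceptionality descends to every factor of a decomposition $P=P_1\circ\cdots\circ P_r$ over $\Q$, while the identities $D_{mn}=D_m\circ D_n$ and $x^{mn}=(x^m)^n$ show that the polynomials listed in $(2)$ are built from indecomposable pieces of prime degree. The general statement then follows once the indecomposable exceptional polynomials over $\Q$ are identified as linear polynomials, $x^\ell$, and $D_\ell(\alpha, x)$ for $\ell$ an odd prime.

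To classify the indecomposable case I would introduce the arithmetic and geometric monodromy groups $A=\mathrm{Gal}(P(X)-t\,/\,\Q(t))$ and its normal subgroup $G=\mathrm{Gal}(P(X)-t\,/\,\overline{\Q}(t))$, both acting transitively on the $n=\deg P$ roots. Indecomposability of $P$ is equivalent to primitivity of these actions, and since $P$ is a polynomial the place at infinity is totally ramified, forcing $G$ to contain an $n$-cycle. The exceptionality condition translates into a statement about orbits of point stabilisers: no $A_1$-orbit on the remaining $n-1$ roots is a single $G_1$-orbit, so in particular $A\neq G$. The heart of the argument is then Burnside's theorem that a transitive permutation group of prime degree is either contained in the affine group $\mathrm{AGL}_1(\ell)$ or is $2$-transitive. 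In the $2$-transitive case the point stabiliser is already transitive on the remaining roots, and checking, via the classification of $2$-transitive groups of prime degree, that the transitive normal subgroup $G$ is likewise $2$-transitive forces $A=G$, contradicting exceptionality. Hence $G\leq\mathrm{AGL}_1(\ell)$ with $\ell$ prime: a cyclic monodromy group yields $P=x^\ell$ up to linear changes, and a metacyclic (dihedral-type) one yields $P=D_\ell(\alpha, x)$, with the excluded configuration $\ell=3$, $\alpha\neq 0$ reappearing as the case that fails exceptionality.

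The main obstacle is this last step: establishing that an indecomposable exceptional polynomial over $\Q$ has prime degree and affine monodromy, where the structure theory of monodromy groups of polynomials enters. This is also the reason the hypotheses restrict the $n_i$ to odd primes -- it is precisely what confines the analysis to primitive groups of prime degree, so that Burnside's classical theorem suffices rather than the full classification of finite simple groups. The bookkeeping relating the cyclic versus dihedral alternatives to the parameters $\alpha_i$ and to the parity and divisibility conditions on the $n_i$ is then routine given the explicit description of the monodromy of $x^\ell$ and $D_\ell(\alpha, x)$.
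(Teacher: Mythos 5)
A preliminary remark: the paper itself does not prove this statement --- it is imported verbatim from Turnwald's resolution of Schur's conjecture (Theorem 4 of the cited article), so your proposal can only be measured against that source, not against anything internal to the paper. Your direction $(2)\Rightarrow(1)$ is correct and is the standard argument: the criteria that $x^n$ permutes $\mathbb{F}_p$ iff $\gcd(n,p-1)=1$ and that $D_n(\alpha,x)$ with $\alpha\neq 0$ permutes $\mathbb{F}_p$ iff $\gcd(n,p^2-1)=1$, the resulting congruence conditions $p\not\equiv 1$ resp.\ $p\not\equiv\pm 1\pmod{n_i}$, the observation that $n_i=3$ with $\alpha_i\neq 0$ is the one unsatisfiable configuration, and the Chinese remainder theorem plus Dirichlet. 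The architecture of your $(1)\Rightarrow(2)$ --- exceptionality of $\phi(X,Y)=(P(X)-P(Y))/(X-Y)$, reduction to indecomposable factors, monodromy groups $G\trianglelefteq A$ containing an $n$-cycle --- is also the right one.

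However, the group-theoretic core of $(1)\Rightarrow(2)$ contains a genuine error and a genuine omission. The error: you apply Burnside's dichotomy to the arithmetic group $A$ and, in the $2$-transitive case, try to conclude that the transitive normal subgroup $G$ is also $2$-transitive, ``forcing $A=G$.'' A transitive normal subgroup of a $2$-transitive group need not be $2$-transitive: take $A=\mathrm{AGL}_1(\ell)\geq G=C_\ell$. This is not a peripheral counterexample --- it is exactly the pair of monodromy groups of $P(x)=x^\ell$, so your argument, as written, would ``rule out'' one of the two families the theorem is classifying. The correct argument applies the dichotomy to $G$ itself: exceptionality directly forbids $G$ from being $2$-transitive (if $G_1$ were transitive on the remaining roots, the unique $A_1$-orbit would be a single $G_1$-orbit), whence $G\leq\mathrm{AGL}_1(\ell)$, and the cyclic versus dihedral alternatives are then pinned down by ramification (Riemann--Hurwitz), not by the classification of $2$-transitive groups of prime degree that you invoke --- that classification depends on the classification of finite simple groups, whose avoidance is the point of Turnwald's treatment. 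The omission: Burnside's theorem concerns only prime degree, and at that stage of the proof the primality of the degree of each indecomposable factor is a conclusion, not a hypothesis; your closing claim that the restriction of the $n_i$ to odd primes ``confines the analysis to prime degree'' is circular in this direction. What is missing is Schur's theorem: a primitive permutation group of composite degree $n$ containing an $n$-cycle is $2$-transitive. Applied to $G$ (which is primitive because, in characteristic $0$, indecomposability over $\Q$ implies indecomposability over $\overline{\Q}$), it combines with the observation above to exclude composite degree; without it, nothing in your sketch shows that the indecomposable factors have prime degree at all.
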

An analogue of the above result in the more general setting of rings of integers of number fields, proven in the cited article, is known as Schur's conjecture.

\section{Main result for $k\nmid l$}\label{main1}

\begin{Th}\label{polyequi}
Let $k, l\in \N$ be such that $k\nmid l$. Then there exists a regular Toeplitz system $X_x$ such that for every continuous map $F:X_x\to\C$ and $y\in X_x$,  the limit
\begin{equation}\label{eq:SNT2}
\lim_{N\to\infty}\frac{1}{N}\sum_{m\leq N}F(\sigma^{m^l}y)
\end{equation}
exists, but the limit
\begin{equation}\label{div}
\lim_{N\to\infty}\frac{1}{N}\sum_{m\leq N}G(\sigma^{m^k}x)
\end{equation}
does not exist for the continuous function $G(y)=(-1)^{y(0)}$.
Moreover, if $l$ is odd, and $q|k$ and $q\nmid l$ for some prime $q$, we can ensure that the orbit $(\sigma^{m^l}y)$ of any point $y\in X_x$ is equidistributed in $X_x$ with respect to the unique $\sigma$-invariant measure on $X_x$.  
\end{Th}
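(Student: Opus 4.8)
The plan is to realise $X_x$ through a viable pair $(n_t),(x_t)$ in which the moduli $n_t$ are built from primes tailored to separate the $k$-th and $l$-th power maps, and the question marks are placed and filled so as to violate the hypothesis of Theorem \ref{thm:posP} for the exponent $k$ while keeping it for $l$. Since $k\nmid l$ there is a prime $q$ with $v_q(k)>v_q(l)$; in the situation of the last assertion we take for $q$ the prime supplied by the hypothesis, for which $v_q(k)\ge 1>0=v_q(l)$. Using Dirichlet's theorem, choose a rapidly increasing sequence of primes $p_1<p_2<\cdots$ with $p_i\equiv 1\pmod{q^{v_q(k)}}$, with $p_i\not\equiv 1\pmod s$ for every odd prime $s\mid kl$ with $s\neq q$, and with a fixed congruence modulo a power of $2$ chosen to control the prime $2$, and set $n_t=p_1\cdots p_t$. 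By the Chinese remainder theorem and the identity $\rho_j(p)=\gcd(j,p-1)$ for primes $p$, one obtains $\rho_k(n_t)=\prod_{i\le t}\gcd(k,p_i-1)$ and $\rho_l(n_t)=\prod_{i\le t}\gcd(l,p_i-1)$; the congruence conditions force the per-prime ratio $\gcd(k,p_i-1)/\gcd(l,p_i-1)\ge c_0>1$ for a fixed constant $c_0$, so that $\rho_l(n_t)/\rho_k(n_t)\to 0$.

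Next choose $?_t$ of order $n_t/\rho_k(n_t)$, placing the question marks of $x_t$ on residues $r\in R^k_{n_t}$ of maximal multiplicity $\rho_k(n_t;n_t,r)=\rho_k(n_t)$. On the one hand $?_t\rho_l(n_t)/n_t\asymp \rho_l(n_t)/\rho_k(n_t)\to 0$, so $?_t=o(n_t/\rho_l(n_t))$ and Theorem \ref{thm:posP} with $k$ replaced by $l$ yields convergence of the averages (\ref{eq:SNT2}) for every $F$ and $y$; moreover $?_t=o(n_t)$, so $X_x$ is regular. On the other hand $?_t\rho_k(n_t)/n_t\asymp 1$, so the hypothesis of Theorem \ref{thm:posP} fails for $k$, and this is the room we exploit. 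Filling the question marks resolved at each stage with values alternating according to the parity of $t$, one arranges that the contribution of these high-multiplicity classes to $\tfrac1N\sum_{m\le N}(-1)^{x(m^k)}$ has a definite sign that flips between consecutive scales; evaluating at $N$ taken in the windows $n_t\ll N\ll n_{t+1}^{1/k}$ (non-empty because the $p_i$ grow fast) then produces two subsequences of $N$ along which the averages of $G(\sigma^{m^k}x)$ tend to distinct limits, so that (\ref{div}) diverges. This step, which adapts the construction of \cite{Toeplitz}, is the main obstacle: one must balance the persistence of the question-mark mass against the requirement that enough of it be resolved with a prescribed sign inside each window, and verify that the partial averages over these windows genuinely separate rather than wash out.

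For the final assertion, assume in addition that $l$ is odd and that $q\mid k$, $q\nmid l$. Then $v_2(l)=0$ and $v_q(l)=0$, so the congruence conditions on the $p_i$ give $\gcd(l,p_i-1)=1$ for every $i$: indeed every prime $s\mid l$ is odd, divides $kl$, and differs from $q$, hence $s\nmid p_i-1$. Equivalently $X\mapsto X^l$ is a permutation modulo each $p_i$, and therefore modulo $n_t$ by the Chinese remainder theorem. Consequently $\{m^l\bmod n_t:0\le m<n_t\}=\Z/n_t\Z$, each residue occurring exactly once, so at every scale the average of any $F$ along the orbit $(\sigma^{m^l}y)$ coincides with the average of $F$ over a full period of $x_t$. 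Letting $t\to\infty$ and using regularity, so that these scale-$t$ word averages converge to $\int_{X_x}F\,d\mu$, upgrades the convergence obtained above to equidistribution of $(\sigma^{m^l}y)$ for every $y\in X_x$.
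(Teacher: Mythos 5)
Your architecture is the paper's own: the prime selection via Dirichlet is exactly the paper's Lemma \ref{ntlem}, the question marks are supported on the coprime $k$-th power residues $\widetilde{R^k_{n_t}}$ so that Theorem \ref{thm:posP} applied with exponent $l$ gives convergence of (\ref{eq:SNT2}) together with regularity, and your closing paragraph is precisely the paper's Theorem \ref{permequi} (a permutation modulo every $n_t$ plus regularity forces the orbit averages to converge to $\int F\,d\mu$). So the convergence and equidistribution halves are sound and match the paper.

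The gap is the divergence of (\ref{div}), which you explicitly defer (``the main obstacle \dots one must balance \dots and verify'') rather than prove, and this is the hard direction of the theorem. Two concrete things are missing. First, you cannot simply ``choose $?_t$ of order $n_t/\rho_k(n_t)$'' at every scale: viability forces the question marks of $x_{t+1}$ to lie above those of $x_t$, and they are consumed at each stage by three fillings --- the first and last $n_t$ positions, the positions outside $\widetilde{R^k_{n_{t+1}}}$, and (crucially) the $\lfloor n_{t+1}^{1/k}\rfloor$ positions $i^k$ that you color with the parity symbol. Persistence of the question-mark mass is therefore an inductive claim needing growth hypotheses on the $p_{t+1}$; the paper imposes (\ref{sumcond}) and (\ref{powcond}) and proves $?_t\geqslant \tfrac{9}{10}\varphi(n_t)/(k')^t$ (inequality (\ref{?count})) by exactly this induction. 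Second, mass alone does not give divergence: one must convert the lower bound on $?_t$ into a lower bound on $\abs{\{m<N:\; x_{t+1}'(m^k)=?\}}$ for $N\approx n_{t+1}^{1/k}$, which uses that every residue in $\widetilde{R^k_{n_t}}$ has exactly $(k')^t$ $k$-th roots modulo $n_t$ together with (\ref{phicond}) to get a proportion at least $7/10$, and then one must bound the question marks destroyed by the arbitrary fillings among $m<N$ (at most $2n_t+N/p_{t+1}+1$, which must be at most $N/10$), so that at least $6N/10$ of the positions $m^k$ carry the prescribed sign, yielding averages $\geqslant 1/5$ for even $t$ and $\leqslant -1/5$ for odd $t$. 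These quantitative estimates, not the architecture, are the content of the divergence statement; as written, your proposal asserts that the averages separate along the two parities of $t$ rather than establishing it.
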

\begin{proof}
First we replace $k$ and $l$ by two smaller numbers. Below by $\nu_p(n)$ for prime $p$ and $n\in\Z^+$ we mean the largest integer satisfying $p^{\nu_p(n)}|n$.
\begin{Lemma}\label{ntlem}
There exist numbers $k'>l'\geqslant 1$ for which there are infinitely many primes $p$ satisfying
\[
\gcd(p-1, k)=k'\quad\text{and}\quad\gcd(p-1, l)=l'.
\]
If $l$ is odd, and $q|k$ and $q\nmid l$ for some prime $q$, then we can take $l'=1$.
\end{Lemma}
\begin{proof}
Since $k\nmid l$, there exists a prime $q$ for which $\nu_q(k)>\nu_q(l)$ (if the last sentence of the theorem statement is true, we can also assume $\nu_q(l)=1$). If $q=2$, then we can set $k'=2k'=2^{\nu_2(l)+1}$, and let $p$ be any prime satisfying
\[
\gcd(p-1, kl)=2^{\nu_2(l)+1}.
\]
Else we let $l'=q^{\nu_q(l)}\cdot \gcd(l, 2)$ and $k'=q^{\nu_q(k)+1}\cdot\gcd(k, 2)$, so that $k'/l'\geqslant q/2>1$. Then $p$ just has to satisfy
\[
\gcd(p-1, 2kl)=2q^{\nu_q(l)+1}.
\]
In both cases there are infinitely many possible primes $p$ by Dirichlet's theorem about primes in arithmetic progressions.
\end{proof}
We now recursively define the sequence $(n_t)$ by setting $n_0=1$ and letting $n_{t+1}=n_tp_{t+1}$ for a strictly increasing sequence $(p_t)$ of primes satisfying the condition in Lemma \ref{ntlem} and the following:
\begin{equation}\label{phicond} \frac{\varphi(n_t)}{n_t}>\frac{9}{10}\end{equation}
\begin{equation}\label{sumcond}\sum\limits_{t=1}^{\infty}\frac{8kn_t}{\sqrt{p_{t+1}}}<\frac{1}{10}\end{equation}
\begin{equation}\label{powcond}
p_{t+1}>30n_t^k.
\end{equation}
We also define a sequence $x_t\in \{0, 1, ?\}^{n_t}$. We start with $x_0=?$. Having defined $x_t$, we define $x_{t+1}$ as follows: first concatenate $n_{t+1}/n_t=p_{t+1}$ copies of $x_t$, creating $x_{t+1}'$. If the resulting word has the symbol "?" at the position $i$ for 
\begin{equation}\label{batch1}
i\in [0, n_t-1]\cup [n_{t+1}-n_t, n_{t+1}-1]\cup \left([0, n_{t+1}-1]\setminus \widetilde{R^k_{n_{t+1}}}\right),
\end{equation}
we change it to a 0 or 1 arbitrarily, creating $x_{t+1}''$. We then fill all the positions $i^k$ for $i\in [0,n_{t+1}^{1/k}]$ which still contain "?" with 0s for even $t$ and 1s for odd $t$, creating $x_{t+1}$.
\medskip

The pair $(n_t), (x_t)$ defined this way will be viable, since the first and last $n_{t-1}$ symbols of $x_t$ are not "?". Also, we have 
\[
?_t\leqslant |\widetilde{R^k_{n_t}}|=\prod_{i=1}^{t}|\widetilde{R^k_{p_i}}|=\prod_{i=1}^{t}\frac{p_i-1}{k'}=\frac{\varphi(n_t)}{(k')^t}.
\]
But $\rho_l(n_t)=(l')^t=o\left((k')^t\right)$ and $\varphi(n_t)<n_t$, so the conditions of Theorem \ref{polyequi} are satisfied, and the limit (\ref{eq:SNT2}) exists. Also, $X_x$ is regular since $?_t=o(n_t)$.

We now estimate $?_t$ from below. Notice that by the Chinese remainder theorem for any $a\in \widetilde{R^k_{n_t}}$ there are exactly $(p-1)/k'$ elements of $\widetilde{R^k_{n_{t+1}}}$ congruent to $a$ modulo $n_t$. Hence, in $x_{t+1}''$ there are at least
\[
\frac{p_{t+1}-1}{k'}?_t-2n_t 
\]
question marks. Taking into account the last step of the construction we get
\[
?_{t+1}\geqslant \frac{p_{t+1}-1}{k'}?_t-2n_t -{n_{t+1}}^{1/k},
\]
so that
\[
?_{t+1}\cdot \frac{(k')^{t+1}}{\varphi(n_{t+1})}\geqslant ?_t\cdot \frac{(k')^{t}}{\varphi(n_{t})}-\frac{2k'n_t}{p_{t+1}-1}-\frac{(n_tp_{t+1})^{1/k}}{p_{t+1}-1}\geqslant ?_t\cdot \frac{(k')^{t}}{\varphi(n_{t})}-\frac{8k'n_t}{p_{t+1}^{(k-1)/k}}.
\]
By induction and from (\ref{sumcond}) we obtain 
\begin{equation}\label{?count}
?_t\geqslant \frac{9}{10}\frac{\varphi(n_t)}{(k')^t}.
\end{equation}
We now estimate the number of $k$-th powers whose positions contain question marks in $x_{t+1}'$. 

Let $C_t=\left\lfloor{n_{t+1}}^{1/k}\right\rfloor=an_t+r$ for $a\in\mathbb{Z}$ and $r\in [0, n_t-1]$, where $a\geqslant 30$ by (\ref{powcond}). Then
\[
|\{i<C_t: x_{t+1}'\left(i^k\right)=?\}|\geqslant a|\{i<n_t: x_{t+1}'\left(i^k\right)=?\}|
\]
But we know that $\rho_k(n_t; n_t, j)=(k')^t$ for any $j\in \widetilde{R_{n_t}^k}$, and all the question marks in $n_t$ are at positions coprime with $n_t$, so the above is at least
\[
a\cdot ?_t\cdot (k')^t\geqslant  \frac{9}{10}\frac{C_t}{n_t}\cdot \frac{9}{10}\varphi(n_t)\geqslant \frac{7}{10} C_t
\]
by (\ref{?count}) and the condition (\ref{phicond}).
By construction we have
\[
|\{i<C_t: x_{t+1}'(i^k)\ne ?\text{ and }x_{t+1}''(i^k)=?\}|\leqslant 2n_t+|\{i<C_t: p_{t+1}|i\}|\leqslant 2n_t+\frac{C_t}{p_{t+1}}+1\leqslant \frac{1}{10}C_t
\]
for large $t$. Therefore in the last step of the construction of $x_{t+1}$, we fill in at least $6C_t/10$ question marks, and so
\[
\frac{1}{C_t}\sum\limits_{m<C_t}G(\sigma^{m^k}x)=\frac{1}{C_t}\sum\limits_{m<C_t}(-1)^{x_{t+1}(m^k)}\geqslant \frac{1}{C_t}\left(\frac{6}{10}C_t-\frac{4}{10}C_t\right)\geqslant \frac{1}{5}
\]
for even $t$, and 
\[
\frac{1}{C_t}\sum\limits_{m<C_t}G(\sigma^{m^k}x)\leqslant -\frac{1}{5}
\]
for odd $t$, and so the limit (\ref{div}) does not exist.
\\\\

Now let us assume that the last sentence in the statement of Theorem \ref{polyequi} is true, so that $l'$ is odd and coprime with all $p-1$ for $p|n_t$ prime. Then the polynomial $m^l$ is a permutation modulo each $n_t$, so the orbit $(\sigma^{m^l}y)$ is equidistributed in $X_x$ by Theorem \ref{permequi}.
\end{proof}

\section{Main result for $k|l$}\label{main2}
We will now discuss the remaining case $k|l$. In this case we will use a more delicate condition to guarantee equidistribution along $l$-th powers. We present a more general result concerning arbitrary polynomials in $\Z[X]$.
\begin{Lemma}\label{convcond}
Let $(x_t), (n_t)$ be a viable pair and $P\in\Z[X]$. Let $X_t\in\{0, 1, ?\}^{\mathbb{Z}}$ be the unique infinite $n_t$-periodic sequence which agrees with $x_t$ on $[0, n_t-1]$. Assume, that we have
\begin{equation}\label{shiftpow}
\sup_{a\in\mathbb{Z}}\abs{\{i\in [0, n_t-1]: X_t(P(i)+a)=?\}}=o(n_t).
\end{equation}
Then for any continuous map $F:X_x\to \mathbb{C}$ and any $y\in X_x$ the limit
\begin{equation}\label{limexists}
\lim_{N\to\infty}\frac{1}{N}\sum\limits_{m<N}F(\sigma^{P(m)}y)
\end{equation}
exists.
\end{Lemma}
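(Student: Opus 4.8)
The plan is to reduce to cylinder functions and then to show that, along the orbit $(\sigma^{P(m)}y)$, the $n_t$-periodic approximation $X_t$ pins down the value of $F$ for all but a density-zero set of indices in every window of length $n_t$; the averages are then essentially periodic averages, and they converge by a Cauchy argument in $t$.

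First I would reduce to the case where $F$ depends only on the coordinates in a finite window $[-M,M]$. Such functions are dense in $C(X_x)$ by Stone--Weierstrass, and since each operator $F\mapsto a_N(F):=\frac1N\sum_{m<N}F(\sigma^{P(m)}y)$ has norm at most $\|F\|_\infty$, convergence of $(a_N(F))_N$ for a dense family of $F$ together with this uniform bound yields convergence for every continuous $F$. So fix such an $F$.

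Next I would describe $y$ through its odometer address. Writing $y=\lim_j\sigma^{n_j}x$ and passing to a subsequence so that $n_j\bmod n_t$ stabilizes to some $b_t$ (with $b_{t+1}\equiv b_t\pmod{n_t}$), viability gives that $x$ agrees with $X_t$ at every determined position, whence $y(i)=X_t(i+b_t)$ whenever $X_t(i+b_t)\ne{?}$. The crux is then to apply (\ref{shiftpow}). Since $P$ has integer coefficients it preserves congruences modulo $n_t$, and $X_t$ is $n_t$-periodic, so for each fixed $r$ the symbol $X_t(P(m)+r+b_t)$ depends only on $m\bmod n_t$; moreover (\ref{shiftpow}) bounds the number of residues $m\in[0,n_t)$ with $X_t(P(m)+r+b_t)={?}$ by $o(n_t)$. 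Taking the union over the $2M+1$ shifts $r\in[-M,M]$, I conclude that for all but $\beta_t=o(n_t)$ residues $\bar m\bmod n_t$ the whole window $y[P(m)-M,P(m)+M]$ is determined by $X_t$, so that $F(\sigma^{P(m)}y)$ is a function $g_t(\bar m)$ of $\bar m$ alone.

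Finally I would decompose and pass to the limit. Writing $N=Qn_t+R$ with $0\le R<n_t$ and summing over the $Q$ full blocks of $n_t$ consecutive integers, each block is a complete residue system and so contributes $S_t:=\sum_{\bar m\text{ good}}g_t(\bar m)$ up to an error at most $\beta_t\|F\|_\infty$ from the bad residues, while the leftover partial block costs at most $n_t\|F\|_\infty$. Hence $|a_N(F)-c_t|\le\frac{\beta_t}{n_t}\|F\|_\infty+\frac{n_t}{N}\|F\|_\infty$ with $c_t:=S_t/n_t$, and letting $N\to\infty$ for fixed $t$ gives $\limsup_N|a_N(F)-c_t|\le\frac{\beta_t}{n_t}\|F\|_\infty=:\delta_t$ with $\delta_t\to0$. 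Thus every limit point of the bounded sequence $(a_N(F))_N$ lies within $\delta_t$ of $c_t$ for all $t$, so any two limit points coincide and $(a_N(F))_N$ converges (to $\lim_t c_t$). The main obstacle is the middle step: recognizing that hypothesis (\ref{shiftpow}) is precisely the statement that the undetermined points of the polynomial orbit have vanishing density in each period window, and tracking the shift by the address $b_t$ correctly; once this is in place, the remaining block averaging is routine.
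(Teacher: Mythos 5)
Your proposal is correct and follows essentially the same route as the paper: reduce to cylinder functions, use hypothesis (\ref{shiftpow}) to conclude that $F(\sigma^{P(m)}y)$ depends only on $m \bmod n_t$ outside a set of $o(n_t)$ residues per period, and compare the averages over $[0,N)$ with the period-$n_t$ average by block decomposition. The only difference is cosmetic: you transfer from $x$ to a general $y$ via its odometer address $b_t$, whereas the paper proves the Cauchy estimate for $\frac{1}{N}\sum_{m<N}F(\sigma^{P(m)+r}x)$ uniformly in the shift $r\in\Z$ and notes that this uniformity yields convergence for every $y\in X_x$.
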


\begin{proof}
We follow the proof of Theorem 6.8 in \cite{Toeplitz}. 

It suffices to show that for every continuous
$F:X_x\to\mathbb{C}$ and every $\varepsilon>0$ there exists $N_\varepsilon$ so that for every $N, M>N_\varepsilon$ and every $r\in\mathbb{Z}$, we have
\begin{equation}\label{ineq}
\abs{\frac{1}{N}\sum\limits_{m<N}F(\sigma^{P(m)+r}x)-\frac{1}{M}\sum\limits_{m<M}F(\sigma^{P(m)+r}x)}<\varepsilon
\end{equation}
Note that the above is stronger than what is needed as it shows that the convergence
in (\ref{limexists}) is uniform in $y\in X_x$. 

It is enough to consider only $F$ depending on finitely many coordinates, since the set of such functions in dense in $C(X_x)$ with the supremum norm topology. So let us assume that whenever $y, y'\in X_x$ agree on $[-C, C]$ for some $C\in\Z^+$, we have $F(y)=F(y')$. We can also assume that $F:X_x\to [0, 1]$. 

Fix $\varepsilon>0$ and $t$ such that the left hand side of (\ref{shiftpow}) is at most $\varepsilon n_t/(2C+1)$. Then fix $r\in\Z$ and let
\[
A_a=\{i\in [0, n_t-1]: X_t(P(i)+a+r)=?\},
\]
and consider the set 
\[
A=\bigcup_{a=-C}^{C}A_a.
\]
Clearly $|A|\leqslant \varepsilon n_t$, and if $i\notin A$, then 
\[
F(\sigma^{P(i)+r}x)=F(\sigma^{P(i)+n_tj+r}x)
\]
for any $j\in\mathbb{Z}$. Therefore
\[
\abs{\sum\limits_{m<kn_t}F(\sigma^{P(m)+r}x)-k\sum\limits_{m<n_t}F(\sigma^{P(m)+r}x)}\leqslant 2k|A|\leqslant 2k\varepsilon n_t
\]
for any $k\in\mathbb{Z}^+$.

Then for any $N>(\varepsilon^{-1}+1)n_t$ we can write $N=kn_t+b$ for $k>\varepsilon^{-1}$ and $b\in [0, n_t-1]$, and we have
\begin{align*}
\abs{\frac{1}{N}\sum\limits_{m<N}F(\sigma^{P(m)+r}x)-\frac{1}{n_t}\sum\limits_{m<n_t}F(\sigma^{P(m)+r}x)}\leqslant \frac{b}{N}+\left(\frac{1}{N}-\frac{1}{kn_t}\right)\cdot N+\\+\abs{\frac{1}{kn_t}\sum\limits_{m<N}F(\sigma^{P(m)+r}x)-\frac{1}{n_t}\sum\limits_{m<n_t}F(\sigma^{P(m)+r}x)}\leqslant 2\varepsilon+2\varepsilon=4\varepsilon.
\end{align*}
Therefore for $M, N>(\varepsilon^{-1}+1)n_t$ the estimate (\ref{ineq}) holds (with the constant $8\varepsilon$ in place of $\varepsilon$).

\end{proof}

We will also require the following number-theoretic result.
\begin{Lemma}\label{NTlem}
Let $p$ be prime and $k, l<p$ be positive integers. For any nonzero $a\in\mathbb{F}_p$ we have
\[
\Big\lvert\abs{\{(x, y)\in\mathbb{F}_p\times\mathbb{F}_p: x^k-y^l=a\}}-p\Big\rvert\leqslant kl\sqrt{p}.
\]
\end{Lemma}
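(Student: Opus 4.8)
The plan is to count the points on the affine curve $x^k - y^l = a$ over $\mathbb{F}_p$ by relating it to a character sum and then invoking the Weil bound. The standard device is to express the indicator of "$u$ is a $k$-th power equal to a given value" via multiplicative characters. Specifically, for a fixed $u \in \mathbb{F}_p^*$, the number of solutions $x$ to $x^k = u$ is
\[
\sum_{\chi: \chi^d = \chi_0} \chi(u),
\]
where $d = \gcd(k, p-1)$ and the sum runs over the $d$ multiplicative characters $\chi$ of order dividing $d$ (this uses that $x \mapsto x^k$ has image the subgroup of $d$-th powers, with fibers of size $d$). The case $u = 0$ must be handled separately, since $0$ contributes exactly one solution $x = 0$ regardless.

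First I would write
\[
N := \abs{\{(x,y): x^k - y^l = a\}} = \sum_{u - v = a} \#\{x: x^k = u\} \cdot \#\{y: y^l = v\},
\]
expand both inner counts using characters of order dividing $d_1 = \gcd(k,p-1)$ and $d_2 = \gcd(l,p-1)$ respectively, and separate out the boundary terms where $u = 0$ or $v = 0$. The main term comes from the pair of trivial characters $(\chi_0, \psi_0)$, which contributes the count of pairs $(u,v)$ with $u - v = a$ and both nonzero, namely $p - 2$ (or nearby), giving the leading $p$. Every other term is a sum of the form
\[
\sum_{v \in \mathbb{F}_p} \chi(v + a)\,\psi(v)
\]
with at least one of $\chi, \psi$ nontrivial; this is a Jacobi-type character sum in one variable over the line $u = v + a$.

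The key step is to bound each such mixed character sum. For a nontrivial additive-free sum $\sum_v \chi(v+a)\psi(v)$ with $(\chi,\psi) \neq (\chi_0,\psi_0)$ and $a \neq 0$, the Weil bound for character sums gives a bound of $\sqrt{p}$ (this is essentially the absolute value of a Jacobi sum; when exactly one character is trivial the sum vanishes or is $O(1)$, and when both are nontrivial the Jacobi sum has modulus exactly $\sqrt{p}$). The total number of nontrivial $(\chi,\psi)$ pairs is at most $d_1 d_2 - 1 \leq kl - 1$, so summing the $\sqrt{p}$ bounds over all of them, together with the $O(1)$ discrepancy in the main term and the boundary contributions (which are themselves $O(d_1 + d_2) = O(k+l)$), yields $\abs{N - p} \leq kl\sqrt{p}$ after absorbing the lower-order terms into the stated constant.

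I expect the main obstacle to be the careful bookkeeping of the exceptional contributions and ensuring the final constant is exactly $kl$ rather than something slightly larger. The boundary terms (where $u = 0$ or $v = 0$) and the pairs where exactly one character is trivial need to be shown to either vanish or be dominated by the $kl\sqrt{p}$ budget; in particular one must check that the $\sqrt{p} \geq 1$ regime (guaranteed since we are counting over $\mathbb{F}_p$) lets the $O(k+l)$ boundary terms be absorbed. The cleanest route is to cite the Weil bound in the packaged form "$\abs{\sum_{x} \chi_1(f_1(x)) \cdots} \leq (\deg - 1)\sqrt{p}$" so that the degree count directly produces the factor $kl$, and then verify the trivial-character and zero-value cases by hand to confirm they do not break the inequality.
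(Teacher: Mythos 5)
Your reduction of the count $N=\lvert\{(x,y):x^k-y^l=a\}\rvert$ to Jacobi-type sums is sound in spirit, but the final bookkeeping step is a genuine gap: the boundary terms where $u=0$ or $v=0$ cannot be ``absorbed'' the way you claim. Writing $d_1=\gcd(k,p-1)$, $d_2=\gcd(l,p-1)$ and $N_k(a)=\#\{x:x^k=a\}$, those boundary contributions equal $N_k(a)+N_l(-a)$, which is $d_1+d_2$ whenever $a$ is a $k$-th power and $-a$ is an $l$-th power; this can be of order $p$, not a harmless lower-order quantity. Meanwhile, in the worst case $k\mid p-1$ and $l\mid p-1$ one has $d_1d_2=kl$, so the slack between your character-sum total $(d_1d_2-1)\sqrt{p}$ and the target $kl\sqrt{p}$ is exactly $\sqrt{p}$, and the absorption would require roughly $k+l\leq\sqrt{p}$, which the hypotheses ($k,l<p$) do not guarantee. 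Concretely, take $p=13$, $k=l=6$, $a=1$: both $1$ and $-1$ are sixth powers mod $13$, so your estimate reads $\lvert N-p\rvert\leq 35\sqrt{13}+10$, which exceeds $36\sqrt{13}$ because $10>\sqrt{13}$. These parameters are admissible for the lemma, so the inequality chain as you describe it does not close, even though the lemma itself holds there.

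The two-variable route can be repaired, but by an exact identity rather than absorption: with the convention $\chi(0)=0$, the pairs with exactly one trivial character contribute precisely $-(N_k(a)-1)-(N_l(-a)-1)$, cancelling the boundary terms on the nose, and the pairs with both characters nontrivial but $\chi\psi$ trivial contribute $-1$ each (modulus $1$, not $\sqrt{p}$); this yields $N=p-(\gcd(d_1,d_2)-1)+\Sigma$, where $\Sigma$ runs over pairs with $\chi$, $\psi$, $\chi\psi$ all nontrivial, so $\lvert N-p\rvert\leq (d_1-1)(d_2-1)\sqrt{p}\leq kl\sqrt{p}$. The paper sidesteps all of this by expanding in one variable only: the identity $\#\{x:x^k=i\}=1+\sum_{\chi\neq 1,\,\chi^k=1}\chi(i)$ is valid for every $i$, including $i=0$, so summing over $y$ and applying the Weil bound to the complete sums $\sum_{y}\chi(y^l+a)$ (legitimate because $a\neq 0$ and $p>l$ force $y^l+a$ to be squarefree, hence not a power) gives at most $k-1$ sums, each at most $(l-1)\sqrt{p}$, i.e.\ the bound $(k-1)(l-1)\sqrt{p}$ with no boundary cases whatsoever. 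That is exactly the ``packaged form'' you mention in your last sentence; had you committed to it from the start, the Jacobi-sum bookkeeping, and the trap it contains, would have been avoided entirely.
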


\begin{proof}
This is an application of the Weil bound for character sums:
\begin{Th}
Fix a prime $p$, a monic polynomial $g\in \mathbb{F}_p[X]$, an integer $d|p-1$, and a multiplicative character $\chi$ of $\mathbb{F}_p$ of order $d$. Then, provided there is no polynomial $h\in\overline{\mathbb{F}_p}[X]$ such that $g=h^d$, we have
\[
\abs{\sum\limits_{x\in \mathbb{F}_p}\chi(g(x))}\leqslant (\deg g-1)\sqrt{p}.
\]
\end{Th}
For a discussion and proof see e.g. Theorem 3.1 in \cite{kowalski}.

We will apply this result to all nontrivial characters of $\mathbb{F}_p$ of order dividing $k$, and to the polynomial $g(x)=x^l+a$. Notice that $g'(x)=lx^{l-1}$, and $a\ne 0$ and $p\nmid l$ since $p>l$, so $g$ and $g'$ are coprime. Therefore $g$ is not a power of any polynomial.

For any $i\in\mathbb{F}_p$ we have
\[
|\{x\in \mathbb{F}_p: x^k=i\}|=1+\sum\limits_{\substack{\chi\ne 1 \\ \chi^k=1}}\chi(i)
\]
Therefore the number of pairs $(x, y)$ satisfying $x^k=y^l+a$ is
\[
\sum\limits_{y\in\mathbb{F}_p}|\{x\in\mathbb{F}_p: x^k=g(y)\}|=\sum\limits_{y\in\mathbb{F}_p}\left(1+\sum\limits_{\substack{\chi\ne 1 \\ \chi^k=1}}\chi(g(y))\right).
\]
This sum consists of the main term $p$ and at most $k-1$ character sums, each bounded by $(l-1)\sqrt{p}$ by the Weil bound. 

\end{proof}

In the proof of Theorem \ref{polyequi}, in (\ref{batch1}) we make sure that the only question marks in $x_t$ are at positions $\widetilde{R^k_{n_t}}$. In the case when $k|l$ this set is too large to guarantee convergence of the averages along $l$-th powers. In the following lemma we describe a set we will use instead.

\begin{Lemma}\label{A_t}
Fix distinct positive integers $k|l$, where $k>1$, and let $n$ be squarefree such that all prime divisors $p$ of $n$ satisfy $l|p-1$ and $p>(12kl)^2$. Let $A\subset (\mathbb{Z}/n\mathbb{Z})^*$ be defined as
\[
A=\{a\in((\mathbb{Z}/n\mathbb{Z})^*)^k: \abs{\{p|n: a\pmod{p}\notin (\mathbb{F}_p)^l\}}>\ln\omega(n)\}.
\]
Then 
\begin{equation}\label{est1}
|A|>\frac{\varphi(n)}{k^{\omega(n)}}\cdot\left (1-2^{-\omega(n)/4}\right)
\end{equation}
and
\begin{equation}\label{est2}
\max_{i\in\Z/n\Z}|\{x\in \Z/n\Z: x^l\in A+i|<n\cdot (2/3)^{\ln\omega(n)}.
\end{equation}
\end{Lemma}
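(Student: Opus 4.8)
The plan is to work prime by prime via the Chinese remainder theorem. Write $n=\prod_{p\mid n}p$ and $r=\omega(n)$, so that $(\mathbb{Z}/n\mathbb{Z})^*\cong\prod_{p\mid n}\mathbb{F}_p^*$ and $((\mathbb{Z}/n\mathbb{Z})^*)^k\cong\prod_{p\mid n}(\mathbb{F}_p^*)^k$. Since $l\mid p-1$ and $k\mid l$ for every $p\mid n$, we have $(\mathbb{F}_p^*)^l\subseteq(\mathbb{F}_p^*)^k$ with $|(\mathbb{F}_p^*)^k|=(p-1)/k$ and $|(\mathbb{F}_p^*)^l|=(p-1)/l$. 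Hence among the $k$-th powers exactly the fraction $1-k/l=(l-k)/l$ lie outside $(\mathbb{F}_p^*)^l$; because $k\mid l$ and $k<l$ we have $l\ge 2k$, so this fraction is $q:=(l-k)/l\ge\tfrac12$, and it is the same at every prime.

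For \eqref{est1} I would count $k$-th power units by how many primes witness a non-$l$-th power. By the product structure, the number of $a\in((\mathbb{Z}/n\mathbb{Z})^*)^k$ that are non-$l$-th powers at exactly $j$ of the $r$ primes equals $\binom{r}{j}q^{j}(1-q)^{r-j}$ times $\varphi(n)/k^{r}$. Thus the complement $((\mathbb{Z}/n\mathbb{Z})^*)^k\setminus A$ has relative density exactly $\sum_{j\le\ln r}\binom{r}{j}q^{j}(1-q)^{r-j}$, a lower binomial tail well below its mean $qr\ge r/2$. Since $q\ge\tfrac12$, this tail is dominated by the one for $q=\tfrac12$, and a standard Chernoff estimate bounds $2^{-r}\sum_{j\le\ln r}\binom{r}{j}$ by $2^{-r/4}$ once $\omega(n)$ is large; this yields $|A|>\tfrac{\varphi(n)}{k^{r}}\bigl(1-2^{-r/4}\bigr)$, which is \eqref{est1}.

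For \eqref{est2} the point is that the saving can only come from primes not dividing the shift. Fix $i$ and split the primes into $P_0=\{p\mid n:p\mid i\}$ and $P_1=\{p\mid n:p\nmid i\}$. If $x^{l}-i\in A$, then $x^{l}-i$ is a nonzero $k$-th power modulo every prime, and it is a non-$l$-th power at more than $\ln r$ primes. At a prime $p\in P_0$ we have $x^{l}-i\equiv x^{l}\pmod p$, which is always an $l$-th power (and a unit precisely when $x\not\equiv0$); hence none of the $>\ln r$ non-$l$-th-power primes lie in $P_0$, forcing $|P_1|>\ln r$, for otherwise the set counted in \eqref{est2} is empty.

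It remains to bound, for each $p\in P_1$ (so $i\not\equiv0\pmod p$), the quantity $M_p=|\{x\in\mathbb{F}_p:x^{l}-i\in(\mathbb{F}_p^*)^k\}|$, and here the key input is Lemma \ref{NTlem}: counting pairs $(x,z)$ with $x^{l}-z^{k}=i$ gives at most $p+kl\sqrt{p}$, and since each admissible $x$ contributes exactly $k$ values of $z$, we get $M_p\le(p+kl\sqrt{p})/k=p/k+l\sqrt{p}$. Using $p>(12kl)^2$ this is at most $p/k+p/(12k)\le\tfrac{2}{3}p$ for $k\ge2$. Multiplying the per-prime counts by the Chinese remainder theorem, and bounding the count $p-1\le p$ at each $p\in P_0$, gives
\[
|\{x\in\mathbb{Z}/n\mathbb{Z}:x^{l}\in A+i\}|\le\prod_{p\in P_0}p\cdot\prod_{p\in P_1}\tfrac{2}{3}p=\Big(\tfrac{2}{3}\Big)^{|P_1|}n<\Big(\tfrac{2}{3}\Big)^{\ln r}n,
\]
which is \eqref{est2}. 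The main obstacle is precisely the per-prime estimate $M_p\le\tfrac23 p$, i.e.\ the application of the Weil bound through Lemma \ref{NTlem}; the remainder is bookkeeping, and the reason the exponent is $\ln\omega(n)$ rather than $\omega(n)$ is exactly that only the primes of $P_1$ contribute a saving.
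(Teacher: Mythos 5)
Your proposal is correct, and for \eqref{est2} it is essentially the paper's own argument: the same split of the primes dividing $n$ into those dividing and not dividing the shift $i$, the same observation that the more than $\ln\omega(n)$ ``non-$l$-th-power primes'' of an element $x^l-i\in A$ must all avoid the divisors of $i$ (so the set is empty unless more than $\ln\omega(n)$ primes give a saving), and the same application of Lemma \ref{NTlem} to get the per-prime bound $p/k+l\sqrt{p}\le\tfrac{2}{3}p$, multiplied out via the Chinese remainder theorem. For \eqref{est1} you take a genuinely different (and sharper) route: the paper bounds the complement of $A$ by a union bound over all subsets $P$ of prime divisors with $|P|\ge\omega(n)-\ln\omega(n)$, using that each $\bigcap_{p\in P}A_p$ has relative density at most $2^{-|P|}$ and that there are at most $\omega(n)^{\ln\omega(n)}$ such subsets; you instead observe that by the Chinese remainder theorem the number of bad primes is \emph{exactly} binomially distributed over $((\Z/n\Z)^*)^k$ with success probability $q=1-k/l\ge\tfrac12$ at every prime, so the complement's relative density is an exact binomial lower tail, which you estimate by monotone coupling and Chernoff. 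This buys something concrete: the paper's union-bound inequality $\omega(n)^{\ln\omega(n)}\cdot 2^{-(\omega(n)-\ln\omega(n))}<2^{-\omega(n)/4}$ in fact fails for moderate $\omega(n)$ (e.g.\ $\omega(n)=3$), whereas the binomial tail really is $<2^{-\omega(n)/4}$ for \emph{every} $\omega(n)\ge 1$ (an entropy or Hoeffding bound handles, say, $\omega(n)\ge 12$, and the finitely many smaller cases can be checked directly). The one blemish is your hedge ``once $\omega(n)$ is large'': the lemma as stated carries no such hypothesis, so you should replace the hedge by the small-case verification just described; as written, your proof (like the paper's) literally covers only large $\omega(n)$, which is harmless for the application in the paper since condition \eqref{indcond} forces $\omega(n_t)\to\infty$, but is a gap relative to the statement itself.
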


\begin{proof}
Let 
\[
A_p=\{a\in((\Z/n\Z)^*)^k: a\pmod{p}\in (\mathbb{F}_p)^l\}. 
\]
Since $n$ is squarefree and $k|p-1$, we have $|((\Z/n\Z)^*)^k|=\varphi(n)/k^{\omega(n)}$. Since $l|p-1$, and by the Chinese remainder theorem,
\[
\frac{|A_p|}{\varphi(n)/k^{\omega(n)}}=\frac{(p-1)/l}{(p-1)/k}\leqslant \frac{1}{2}.\]
Let $\mathcal{P}$ be the family of all subsets of $\{p: p|n_t\}$ of size at least $\omega(n)-\ln\omega(n)$. From the above estimate and by the Chinese remainder theorem, for any $P\in\mathcal{P}$ we have
\[
\frac{k^{\omega(n)}}{\varphi(n)}\cdot \bigcap_{p\in P}A_p\leqslant \left(\frac{1}{2}\right)^{\omega(n)-\ln\omega(n)},
\]
and since $|\mathcal{P}|\leqslant \omega(n)^{\ln\omega(n)}$, we have
\begin{align*}
\frac{k^{\omega(n)}}{\varphi(n)}\cdot \bigcup_{P\in\mathcal{P}}\bigcap_{p\in P}A_p\leqslant \omega(n)^{\ln\omega(n)}\cdot\left(\frac{1}{2}\right)^{\omega(n)-\ln\omega(n)}=\\=\left(\frac{1}{2}\right)^{\omega(n)-\ln\omega(n)-\ln(2)(\ln\omega(n))^2}<\left(\frac{1}{2}\right)^{\omega(n)/4}.
\end{align*}
But by definition we have
\[
A=((\Z/n\Z)^*)^k\setminus\left(\bigcup_{P\in\mathcal{P}}\bigcap_{p\in P}A_p\right),
\]
so we obtain (\ref{est1}).

Now assume that (\ref{est2}) does not hold, say for some $i\in\mathbb{Z}/n\Z$. We first show that
\begin{equation}\label{manydiv}
|\{p|n: p\nmid i\}|>\ln\omega(n).
\end{equation}
Indeed, by our assumption the set $(A+i)\cap (\Z/n\Z)^l$ is nonempty, so some $a\in A$ satisfies $a+i\in(\Z/n\Z)^l$. Then 
\[
\{p|n: p|i\}\subset \{p|n: a\pmod{p}\in(\mathbb{F}_p)^l\},
\]
and by the definition of the set $A$ this set has less than $\omega(n)-\ln \omega(n)$ elements.

If $p|n$ and $p\nmid i$, then by Lemma \ref{NTlem} we have 
\[
|\{x\in \mathbb{F}_p: x^l\in (\mathbb{F}_p)^k+a\}|\leqslant \frac{1}{k}|\{(x, y)\in \mathbb{F}_p\times\mathbb{F}_p: x^l=y^k+a\}|+l\leqslant \frac{p}{k}+2kl\sqrt{p}\leqslant \frac{2}{3}p.
\]
Here the first inequality follows from the fact that the number of $k$-th roots modulo $p$ of $x^l-a$ is exactly 0 or $k$ unless $x^l-a=0$, which happens at most $l$ times. The last inequality follows from $p>(12kl)^2$.

Since $A\subset (\mathbb{Z}/n\Z)^k$, and by the Chinese remainder theorem, we have
\[
|\{x\in \Z/n\Z: x^l\in A+i\}|\leqslant \prod_{p|n}|\{x\in \mathbb{F}_p: x^l\in (\mathbb{F}_p)^k+i\}|\leqslant \left(\frac{2}{3}\right)^{\ln\omega(n)}\cdot n,
\]
where we bound each term in the product by $2p/3$ if $p\nmid i$ and by $p$ otherwise.
\end{proof}

We are now ready for the  main result of this section:
\begin{Th}
Let $k, l\in \N$ be such that $k| l$ and $k>1$. Then there exists a regular Toeplitz system $X_x$ such that for every continuous map $F:X_x\to\C$ and $y\in X_x$,  the limit
\begin{equation}\label{eq:SNT3}
\lim_{N\to\infty}\frac{1}{N}\sum_{m\leq N}F(\sigma^{m^l}y)
\end{equation}
exists, but the limit
\begin{equation}\label{div2}
\lim_{N\to\infty}\frac{1}{N}\sum_{m\leq N}G(\sigma^{m^k}x)
\end{equation}
does not exist for the continuous function $G(y)=(-1)^{y(0)}$.
\end{Th}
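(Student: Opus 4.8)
The plan is to reuse the construction from the proof of Theorem \ref{polyequi} almost verbatim, replacing the set $\widetilde{R^k_{n_t}}$ of admissible question-mark positions by the set $A=A_t$ produced by Lemma \ref{A_t} at the level $n=n_t$. First I would set $n_0=1$ and $n_{t+1}=n_tp_{t+1}$ for a rapidly increasing sequence of primes, now required to satisfy $l\mid p_{t+1}-1$ (whence $k\mid p_{t+1}-1$, as $k\mid l$) and $p_{t+1}>(12kl)^2$, besides the growth conditions (\ref{phicond}), (\ref{sumcond}) and (\ref{powcond}); these are simultaneously achievable by Dirichlet's theorem. The words $x_t$ are then built by concatenating $p_{t+1}$ copies of $x_t$, filling arbitrarily every remaining question mark lying outside $A_{t+1}$ or on the two boundary blocks, and finally overwriting every still-undetermined position $i^k$ with $i\le n_{t+1}^{1/k}$ by $0$ for even $t$ and $1$ for odd $t$ (for the initial levels one instead keeps all of $\widetilde{R^k_{n_t}}$, as explained below). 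By construction the question marks of $x_t$ always form a subset of $A_t$.

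Convergence along $m^l$ and regularity are then immediate. Since the question-mark positions lie in $A_t$, for the periodic extension $X_t$ of $x_t$ and every $a$ we have
\[
\bigl|\{i\in[0,n_t-1]:X_t(i^l+a)=?\}\bigr|\le\max_{b}\bigl|\{i:i^l\in A_t+b\pmod{n_t}\}\bigr|<n_t\,(2/3)^{\ln\omega(n_t)},
\]
which is $o(n_t)$ by (\ref{est2}); Lemma \ref{convcond} applied with $P(m)=m^l$ then gives convergence of the averages in (\ref{eq:SNT3}) for all $y$. Regularity follows from $?_t\le|A_t|\le\varphi(n_t)/k^{\omega(n_t)}=o(n_t)$.

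For the divergence along $m^k$ I would follow the counting in the proof of Theorem \ref{polyequi} line by line. With $C_t=\lfloor n_{t+1}^{1/k}\rfloor=an_t+r$ and $a\ge30$ by (\ref{powcond}), and using that each coprime $k$-th power residue modulo $n_t$ has exactly $k^{\omega(n_t)}$ preimages under $i\mapsto i^k$, I would bound $|\{i<C_t:x_{t+1}'(i^k)=?\}|$ below by $a\cdot ?_t\cdot k^{\omega(n_t)}$. Feeding in the lower bound $?_t\ge(1-\varepsilon)\varphi(n_t)/k^{\omega(n_t)}$ established below, together with (\ref{phicond}), this is at least a fixed fraction close to $\tfrac{81}{100}$ of $C_t$; subtracting the $O(n_t)+O(C_t/p_{t+1})$ positions lost in passing to $x_{t+1}''$ leaves at least, say, $\tfrac35 C_t$ small $k$-th powers whose value in $x_{t+1}$ is the parity bit. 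Exactly as in Theorem \ref{polyequi}, this forces $\frac1{C_t}\sum_{m<C_t}G(\sigma^{m^k}x)$ to be bounded below by a positive constant for even $t$ and above by its negative for odd $t$, so the limit in (\ref{div2}) does not exist.

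The only genuinely new point, and the main obstacle, is the lower bound $?_t\ge(1-\varepsilon)\varphi(n_t)/k^{\omega(n_t)}$. In Theorem \ref{polyequi} this was automatic because $\widetilde{R^k}$ propagates cleanly, $\widetilde{R^k_{n_{t+1}}}\bmod n_t=\widetilde{R^k_{n_t}}$, so every admissible position survives concatenation. The set $A_t$ lacks this property: its defining condition --- being a non-$l$-th power modulo more than $\ln\omega(n_t)$ of the primes dividing $n_t$ --- is global rather than a conjunction of per-prime conditions, so reducing modulo $n_t$ can push an element of $A_{t+1}$ out of $A_t$. The surviving question marks are therefore forced to be the nested intersection $\{j:j\bmod n_s\in A_s\text{ for all }s\}$, and the constraints from small $s$ (already at $s=1$ one retains only a proportion $1-k/l\le\tfrac12$) would ruin the bound. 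I would avoid this by running the plain $\widetilde{R^k}$-construction for all levels below a large threshold $t_0$ and intersecting with $A_s$ only for $s\ge t_0$; then the question marks of $x_{t_0}$ are essentially all of $A_{t_0}$, and (\ref{est1}) gives $?_{t_0}\ge(1-2^{-\omega(n_{t_0})/4})\varphi(n_{t_0})/k^{\omega(n_{t_0})}$. At each further level the lost proportion is governed by the fraction of coprime $k$-th power residues modulo $n_s$ whose number $N_s$ of non-$l$-th-power primes falls in the critical window near $\ln\omega(n_s)$; since, by the Chinese remainder theorem, $N_s$ is a sum of independent $\mathrm{Bernoulli}(1-k/l)$ variables with mean at least $s/2\gg\ln s$, the concentration estimate underlying (\ref{est1}) bounds this loss by $2^{-\omega(n_s)/4}$, which is summable in $s$. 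Taking $t_0$ large then keeps the running proportion above $1-\varepsilon$ for every $t$, which is exactly what the divergence count needs.
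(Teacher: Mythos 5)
Your overall strategy is the same as the paper's: the sets $A_t$ from Lemma \ref{A_t}, Lemma \ref{convcond} together with (\ref{est2}) for convergence along $m^l$, (\ref{est1}) for the lower bound on $?_t$, and parity-bit filling of small $k$-th powers for divergence. Your treatment of the propagation problem (running the plain $\widetilde{R^k}$-construction up to a threshold $t_0$ and only then intersecting with $A_s$) is a legitimate substitute for what the paper actually does: there, $n_{t+1}/n_t$ is a product of \emph{many} new primes, so $\omega(n_t)$ grows fast enough that the per-level loss $2^{-\omega(n_{t+1})/4}$ coming from (\ref{est1}) is summable — this is exactly condition (\ref{indcond}). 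Note that your single-new-prime choice $n_{t+1}=n_tp_{t+1}$ is what creates the low-level problem you worry about in your last paragraph ($\omega(n_1)=1$ makes (\ref{est1}) vacuous there); with multi-prime steps the ``nested intersection'' issue is absorbed directly into the recursion for $?_t$, with no threshold needed.

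The genuine gap is in the divergence count, and it comes from the order of your construction steps. You fill the positions outside $A_{t+1}$ \emph{before} placing the parity bits, so a position $i^k$ with $i<C_t$ receives the parity bit only if $i^k\in A_{t+1}$. Your claimed loss of $O(n_t)+O(C_t/p_{t+1})$ is the count from Theorem \ref{polyequi}, and it is valid there only because a $k$-th power coprime to $n_{t+1}$ automatically lies in $\widetilde{R^k_{n_{t+1}}}$. Membership in $A_{t+1}$ additionally requires $i^k$ to be a non-$l$-th-power residue modulo more than $\ln\omega(n_{t+1})$ primes dividing $n_{t+1}$, and nothing you invoke bounds the number of $i<C_t$ failing this: the condition involves the behaviour of $i\bmod p_{t+1}$, and since $C_t\approx n_{t+1}^{1/k}\ll p_{t+1}$, the interval $[0,C_t)$ does not contain even one full period modulo $p_{t+1}$, so no counting modulo $n_{t+1}$ applies. (Your order could be rescued by observing that $i^k\notin A_{t+1}$ forces $i^k$ to be an $l$-th power residue at all but $\ln\omega(n_{t+1})$ of the \emph{old} primes $p\mid n_t$, and proving a concentration bound modulo $n_t$ for that event; but this is an extra lemma you neither state nor prove.) The paper avoids the issue with a one-line change: the parity bits are placed \emph{first}, on every question mark of $x_{t+1}'$ at a position $i^k$, $i\le n_{t+1}^{1/k}$, and only afterwards are the remaining question marks outside $A_{t+1}$ filled arbitrarily. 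Since the parity-bit positions are no longer question marks, the later filling cannot overwrite them, so the divergence count reduces to counting modulo $n_t$ exactly as in Theorem \ref{polyequi}, while the question marks of $x_{t+1}$ are still contained in $A_{t+1}$, leaving the convergence argument unaffected.
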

\begin{proof}
We first recursively define a sequence $(n_t)$ so that $n_t|n_{t+1}$, each $n_t$ is squarefree, and all prime divisors $p$ of $n_t$ satisfy $l|p-1$ and $p>(12kl)^2$. Additionally, we set $n_0=0$ and require 
\begin{equation}\label{indcond}
\sum\limits_{t=0}^{\infty}\left(2^{-\omega(n_{t+1})/4}+\frac{2n_t+\sqrt[k]{n_{t+1}}}{\varphi(n_{t+1})/k^{\omega(n_{t+1})}}\right)<\frac{1}{10},
\end{equation}
\begin{equation}\label{phicond2}
\frac{\varphi(n_t)}{n_t}>\frac{9}{10}
\end{equation}
\begin{equation}\label{powcond2}
n_{t+1}>10n_t^k.
\end{equation}

Let $A_t\subset (\Z/n_t\Z)^*$ be the set given by Lemma \ref{A_t} for $n_t$. We treat $A_t$ as a subset of $[0, n_t-1]$.

We also define a sequence $x_t\in \{0, 1, ?\}^{n_t}$. We start with $x_0=?.$ Having defined $x_t$ we define $x_{t+1}$ as follows: first we concatenate $n_{t+1}/n_t$ copies of $x_t$, creating $x_{t+1}'$. We then fill all the positions $i^k$ for $i\in [0,n_{t+1}^{1/k}]$ which still contain "?" with 0s for even $t$ and 1s for odd $t$, creating $x_{t+1}''$. If the resulting word has the symbol "?" at the position $i$ for 
\begin{equation}
i\in [0, n_t-1]\cup [n_{t+1}-n_t, n_{t+1}-1]\cup \left([0, n_{t+1}-1]\setminus A_{t+1}\right),
\end{equation}
we change it to a 0 or 1 arbitrarily, creating $x_{t+1}$. 

The pair $(n_t), (x_t)$ defined this way will be viable. Let us now show that the condition (\ref{shiftpow}) is satisfied. Also, all question marks in $x_t$ are at positions in $A_t\subset \widetilde{R^k_{n_t}}$, so
\[
?_t\leqslant |\widetilde{R^k_{n_t}}|=o(n_t),
\]
and so $X_x$ is regular.

Fix $t$ and $a\in\mathbb{Z}$. Notice that
\begin{equation}\label{excset}
\{i\in [0, n_t-1]: X_t(i^l+a)=?\}\subset\{i\in [0, n_t-1]: i^l+a\in A_t+n_t\Z\},
\end{equation}
where we let $X_t\in\{0, 1, ?\}^{\mathbb{Z}}$ be the unique infinite $n_t$-periodic sequence which agrees with $x_t$ on $[0, n_t-1]$.
By (\ref{est2}) the right hand side of (\ref{excset}) has at most $n_t\cdot (2/3)^{\ln \omega(n_t)}=o(n_t)$ elements. 
Therefore, by Lemma \ref{convcond}, we obtain convergence of averages (\ref{eq:SNT3}).

\vspace{5mm}

We now estimate $?_t$ from below. By the Chinese remainder theorem
\[
\{i\in \widetilde{R^k_{n_{t+1}}}: x_{t+1}'(i)=?\}=?_t\cdot\prod_{\substack{p|n_{t+1}\\ p\nmid n_t}}\frac{p-1}{k}=?_t\cdot \frac{\varphi(n_{t+1}/n_t)}{k^{\omega(n_{t+1}/n_t)}},
\]
 so from (\ref{est1}) we get
\[
\{i\in A_{t+1}: x_{t+1}'(i)=?\}\geqslant ?_t\cdot \frac{\varphi(n_{t+1}/n_t)}{k^{\omega(n_{t+1}/n_t)}}-\frac{\varphi(n_{t+1})}{k^{\omega(n_{t+1})}}\cdot 2^{-\omega(n_{t+1})/4},
\]
and
\[
?_{t+1}\geqslant ?_t\cdot \frac{\varphi(n_{t+1}/n_t)}{k^{\omega(n_{t+1}/n_t)}}-\frac{\varphi(n_{t+1})}{k^{\omega(n_{t+1})}}\cdot 2^{-\omega(n_{t+1})/4}-2n_t-\sqrt[k]{n_{t+1}}.
\]
By induction we get
\[
\frac{?_{t}}{|\widetilde{R^k_{n_{t}}}|}=?_{t}\cdot \frac{k^{\omega(n_t)}}{\varphi(n_t)}\geqslant 1-\sum\limits_{s=0}^{t-1}\left(2^{-\omega(n_{s+1})/4}+\frac{2n_s+\sqrt[k]{n_{s+1}}}{\varphi(n_{s+1})/k^{\omega(n_{s+1})}}\right)\geqslant \frac{9}{10},
\]
where we use (\ref{indcond}).

Now we estimate the number of question mark we fill in when going from $x_{t+1}'$ to $x_{t+1}''$. First recall that all question marks in $x_t$ are at positions in $\widetilde{R^k_{n_t}}$, and for all $a\in \widetilde{R^k_{n_t}}$ we have
\[
|\{i<n_t: i^k\equiv a\pmod{n_t}\}|=k^{\omega(n_t)},
\]
so 
\[
|\{i<n_t: X_t(i^k)=?\}|=?_t\cdot k^{\omega(n_t)}\geqslant \frac{9}{10}\varphi(n_t)\geqslant \frac{8}{10}n_t
\]
by (\ref{phicond2}).

Let $C_t=\left\lfloor{n_{t+1}}^{1/k}\right\rfloor=an_t+r$ for $a\in\mathbb{Z}$ and $r\in [0, n_t-1]$, where $a\geqslant 10$ by (\ref{powcond2}). Then
\[
|\{i<C_t: x_{t+1}'\left(i^k\right)=?\}|\geqslant a|\{i<n_t: x_{t+1}'\left(i^k\right)=?\}|\geqslant \frac{8}{10}an_t\geqslant \frac{7}{10}C_t.
\]
 Therefore when constructing $x_{t+1}''$ we fill in at least $7C_t/10$ question marks, and so
\[
\frac{1}{C_t}\sum\limits_{i<C_t}G(\sigma^{i^k}x)=\frac{1}{C_t}\sum\limits_{i<C_t}(-1)^{x_{t+1}(i^k)}\geqslant \frac{1}{C_t}\left(\frac{7}{10}C_t-\frac{3}{10}C_t\right)\geqslant \frac{2}{5}
\]
for even $t$, and 
\[
\frac{1}{C_t}\sum\limits_{i<C_t}G(\sigma^{i^k}x)\leqslant -\frac{2}{5}
\]
for odd $t$, and so the limit (\ref{div}) does not exist.
\end{proof}

\section{Density of sparse orbits}
In this section we investigate the cases, when a sparse orbit $(T^{a_n}y)$ along some sequence $(a_n)$ is dense in a Toeplitz system $X_x$ containing $y$.

We first define
\begin{Def}
Let $I\subset\Z$, and let $x\in \{0, 1\}^{I}$ and $s\in\N$. For $\varepsilon\in \{0, 1\}$ by $\Per^{(\varepsilon)}_s(x)$ we denote
\[
\{n\in\Z: (\forall k\in\N) x(n+ks)=\varepsilon\text{ whenever }n+ks\in I\}.
\]
Clearly $\Per^{(\varepsilon)}_{s}(x)$ is a disjoint union of bi-infinite arithmetic sequences with difference $s$. Therefore it is fully determined by its image under the quotient homomorphism $\pi_s: \Z\to\Z/s\Z$. Let us denote
\[
\per^{(\varepsilon)}_s(x)=\abs{\pi_s\left(\Per^{(\varepsilon)}_s(x)\right)}\in [0, s].
\]
Notice that
\[
\Per_s(x)=\Per^{(0)}_s(x)\cup \Per^{(1)}_s(x),
\]
and if $s'$ divides $s$, then $\Per_{s'}(x)\subset \Per_s(x)$.

By an essential period of a Toeplitz word $x\in\{0, 1\}^{\Z}$ we mean such an $s$ that $\Per_s(x)$ is nonempty and does not coincide
with $\Per_{s'}(x)$ for any $s'<s$.

It is easily checked that if $x'\in X_x$ is also Toeplitz, then $x$ and $x'$ have the same essential periods, and so it makes sense to talk about an essential period of a Toeplitz system $X_x$.
\end{Def}

\begin{Lemma}
Let $x$ be a Toeplitz word, and $s$ any essential period of $x$. Then for any $y\in X_x$ we have
\[
y\notin \overline{\{\sigma^ny: n\in \mathbb{Z}\text{ and } s\nmid n\}}.
\]
\end{Lemma}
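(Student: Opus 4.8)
The plan is to encode the period-$s$ structure of the system as an equivariant ``phase'' map and to reduce the statement to the finite memory of that map. Concretely, I will produce a continuous map $\psi\colon X_x\to\Z/s\Z$ satisfying $\psi(\sigma z)=\psi(z)+1$ for all $z$. Granting such a $\psi$, the conclusion is immediate: since $\psi$ takes only finitely many values and is continuous on the zero-dimensional compact space $X_x$, it is locally constant, so there is a radius $C$ for which $\psi(z)$ depends only on $z|_{[-C,C]}$. If $\sigma^n y$ agrees with $y$ on $[-C,C]$, then $\psi(\sigma^n y)=\psi(y)$, while equivariance gives $\psi(\sigma^n y)=\psi(y)+n$; hence $s\mid n$. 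Thus the cylinder $U=\{z:\ z|_{[-C,C]}=y|_{[-C,C]}\}$ is a neighbourhood of $y$ containing no point $\sigma^n y$ with $s\nmid n$, which is exactly the assertion.

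To build $\psi$ I first record the combinatorial content of essentiality. Since $\Per_s(x)$ is a union of residue classes mod $s$ on each of which $x$ is constant, I set $D=\pi_s(\Per_s(x))\subseteq \Z/s\Z$ and define a coloring $c\colon D\to\{0,1\}$ by $c(\pi_s(a))=x(a)$. I claim essentiality forces $(D,c)$ to have no nontrivial shift symmetry: if $D+\delta=D$ and $c(r+\delta)=c(r)$ for all $r\in D$ with $\delta\not\equiv0\pmod s$, then, iterating the symmetry, the pattern is invariant under $d=\gcd(\delta,s)$, and since $d\mid s$ and $d<s$ one checks that every $s$-periodic position of $x$ is already $d$-periodic, i.e. $\Per_d(x)=\Per_s(x)$, contradicting that $s$ is essential.

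Next I propagate the coloring to all of $X_x$. For $\phi\in\Z/s\Z$ consider the property $Q_\phi(z)$: ``$z(m)=c(\phi+m)$ whenever $\phi+m\in D$''. One has $Q_0(x)$, the relation $Q_\phi(z)\Leftrightarrow Q_{\phi+1}(\sigma z)$, and the property ``$\exists\phi:\ Q_\phi(z)$'' is closed (a convergent sequence with a fixed consistent phase keeps that phase in the limit) and shift-invariant; as the orbit of $x$ is dense, every $z\in X_x$ admits a consistent phase. Moreover $Q_\phi(z)$ implies that $z$ is genuinely $s$-periodic with coloring $c(\phi+\cdot)$ on the classes $D-\phi$, because $\phi+(m+ks)=\phi+m$. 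I then set $\psi(z)=\phi$, and it remains to see that $\psi$ is single-valued and continuous, which together with the reduction above finishes the proof.

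The main obstacle is precisely this last point: showing the consistent phase is \emph{unique}, equivalently that every $z\in X_x$ carries exactly one translated copy of the pattern $(D,c)$, i.e. $\lvert\pi_s(\Per_s(z))\rvert=\lvert D\rvert$ for all $z$ (not merely for the Toeplitz points, for which it is a translate of $D$ by construction). Given this, two consistent phases $\phi,\phi'$ would yield $D-\phi=\pi_s(\Per_s(z))=D-\phi'$ with matching colorings, hence a shift symmetry of $(D,c)$ by $\phi'-\phi$, so $\phi=\phi'$ by the aperiodicity above; continuity of $\psi$ then follows, since once $\psi$ is well defined a window of length $\ge s$ exhibits $c$ anchored at the phase, and the aperiodicity makes this anchoring unambiguous. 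Establishing the uniform count $\lvert\pi_s(\Per_s(z))\rvert=\lvert D\rvert$ — the fact that non-Toeplitz points of $X_x$ acquire no spurious $s$-periodic classes — is the technical heart, and is exactly where essentiality, through the aperiodicity of the period-$s$ coloring, is indispensable.
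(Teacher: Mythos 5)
Your overall strategy --- building a continuous map $\psi\colon X_x\to\Z/s\Z$ with $\psi(\sigma z)=\psi(z)+1$ and deducing the lemma from local constancy of $\psi$ --- is sound, and it is genuinely different from the paper's argument: the paper never constructs a factor map, but works directly with finite windows, comparing the counts $\per^{(0)}_s$ of $0$-periodic residue classes of subwords of $x$. Your reduction step is correct, your aperiodicity claim for the pattern $(D,c)$ is correct, and closedness and shift-equivariance of ``having a consistent phase'' are fine. (One small repair: once uniqueness of the phase is known, continuity of $\psi$ is automatic, because the sets $\{z:Q_\phi(z)\}$, $\phi\in\Z/s\Z$, form a finite cover of $X_x$ by disjoint closed sets, which are therefore clopen. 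Your stated reason --- that a window of length $\geqslant s$ anchors the phase --- is not right as written, since a single window meets each residue class only a bounded number of times and cannot detect $s$-periodicity.)

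The genuine gap is that you never prove the statement you yourself label ``the technical heart'': that every $z\in X_x$ admits at most one consistent phase, equivalently that $\abs{\pi_s(\Per_s(z))}=\abs{D}$ for all $z\in X_x$, i.e.\ that points of the orbit closure acquire no spurious $s$-periodic residue classes. Without this, two distinct translates $D-\phi$ and $D-\phi'$ could both sit inside $\pi_s(\Per_s(z))$, your aperiodicity argument yields no contradiction, $\psi$ is not well defined, and the whole construction collapses. This missing fact is exactly where the difficulty of the lemma lives; it is what the paper's counting chain $\per^{(0)}_s(x)\leqslant\per^{(0)}_s(\omega)\leqslant\per^{(0)}_s(\nu)=\per^{(0)}_s(x)$ is really establishing (that the periodic skeleton of a long window is a single rigid translate of that of $x$). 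So as it stands the proposal has not engaged with the core of the problem. For completeness, the claim is provable along the following lines: for each residue class $r\notin D$, the restriction $k\mapsto x(r+ks)$ is a non-constant Toeplitz word, so both symbols occur in it syndetically; hence there is a uniform $K$ such that every word of length $Ks$ in $\mathcal{L}_{X_x}$ has exactly $\abs{D}$ constant residue classes modulo $s$, and applying this to arbitrarily long windows of $z$ gives $\abs{\pi_s(\Per_s(z))}\leqslant\abs{D}$. Some argument of this kind must be supplied before your proof is complete.
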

\begin{proof}
Assume without loss of generality that $0\in \Per_s(x)$ but $0\notin \Per_{s'}(x)$ for any $s'<s$, and assume that $x(0)=0$. 

First notice, that for some $N$ we have 
\[
\Per^{(0)}_s(x|_{[0, N]})=\Per^{(0)}_s(x).
\]
Indeed, for any $i\in [0, s-1]\setminus \Per^{(0)}_s(x)$ there exists a natural number $N_i$ such that $x(N_i)=1$ and $N_i\equiv i\pmod{s}$. It is enough to take
\[
N=\max\{N_i: i\in [0, s-1]\setminus \Per^{(0)}_s(x)\}.
\]

By minimality of $X_x$, we can find a natural number $M$ such that any word in $\mathcal{L}_{X_x}$ of length $M$ contains $x|_{[0, N]}$ as a subword. We denote $x|_{[0, N]}=\nu$.

We will now show, that 
\[
d(y, \sigma^n y)\geqslant 2^{-M}
\]
whenever $s\nmid n$. This will clearly end the proof. 

Pick $n$ such that the above inequality does not hold. Then $y|_{[0, M]}=y|_{[n, n+M]}$; let us denote this word by $\omega$. We know that $\omega|_{[k, k+N]}=\nu$ for some $k$, so
\[
\per^{(0)}_s(x)\leqslant \per^{(0)}_s(\omega)\leqslant \per^{(0)}_s(\nu)=\per^{(0)}_s(x).
\]
Here the first inequality follows from $\omega\in\mathcal{L}_{X_x}$, the second from $\nu\subset \omega$, and the equality follows from the definition of $\nu$. In particular, both of the inequalities are actually equalities.

Since $\omega$ appears in $y|_{[0, n+M]}$ at positions starting with $0$ and $n$, we have
\[
\Per^{(0)}_s(y|_{[0, n+M]})\subset \Per^{(0)}_s(\omega)\cap \left(n+\Per^{(0)}_s(\omega)\right),
\]
but 
\[
\per^{(0)}_s(y|_{[0, n+M]})\geqslant \per^{(0)}_s(x)=\per^{(0)}_s(\omega)
\]
since $y|_{[0, n+M]}\subset x$. Hence
\[
\abs{\pi_s\left(\Per_s^{(0)}(\omega)\right)}=\per^{(0)}_s(\omega)\leqslant \abs{\pi_s\left(\Per_s^{(0)}(\omega)\right)\cap \pi_s\left(n+\Per^{(0)}_s(\omega)\right)}.
\]
The inequality must actually be an equality, and in particular we obtain
\[
\pi_s\left(\Per_s^{(0)}(\omega)\right)=\pi_s\left(n+\Per^{(0)}_s(\omega)\right).
\]
We have $\nu\subset\omega$, but $\per^{(0)}_s(\nu)=\per^{(0)}_s(\omega)$, so
\[
\pi_s\left(\Per_s^{(0)}(\nu)\right)=\pi_s\left(\Per_s^{(0)}(\omega)-k\right)=\pi_s\left(n-k+\Per^{(0)}_s(\omega)\right)=\pi_s\left(n+\Per^{(0)}_s(\nu)\right),
\]
and so
\[
\Per_s^{(0)}(\nu)=n+\Per_s^{(0)}(\nu),
\]
and by the definition of $\nu$ we get
\[
\Per_s^{(0)}(x)=n+\Per_s^{(0)}(x).
\]
But then $x(an+bs)=x(0)=0$ for all $a, b\in\Z$, and so $0\in\Per_{\gcd(n, s)}(x)$. We assumed that $s$ is an essential period of 0, so we obtain $\gcd(n, s)=s$, so $s|n$.
\end{proof}

\begin{Th}\label{resfull}
Let $X_x$ be a Toeplitz system with an essential period $s$, and pick $y\in X_x$. If $\left(\sigma^{a_n}y\right)$ is dense in $X_x$ for some sequence $(a_n)$ of integers, then $(a_n)$ gives a full set of residues modulo $s$.
\end{Th}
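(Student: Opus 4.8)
The plan is to contrapose: I will show that if $(a_n)$ misses some residue class modulo $s$, then the orbit $(\sigma^{a_n}y)$ cannot be dense. The key observation is that the preceding Lemma gives exactly the separation I need. By that Lemma, for any $y\in X_x$ and any essential period $s$, the point $y$ is not in the closure of $\{\sigma^n y: s\nmid n\}$; more precisely, the proof of that Lemma establishes a uniform gap $d(y,\sigma^n y)\geqslant 2^{-M}$ whenever $s\nmid n$. I want to upgrade this from the single point $y$ to a statement that separates the residue classes modulo $s$ inside $X_x$.

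First I would observe that $X_x$ is minimal, so $y$ itself lies in $X_x$ and the orbit closure of $y$ is all of $X_x$; in particular $y$ is a limit of shifts of $x$, but more usefully $y$ is itself a Toeplitz point (every point of a Toeplitz orbit closure that is a limit point shares the essential periods, as noted in the excerpt). Thus the Lemma applies directly to $y$. Suppose for contradiction that $(a_n)$ omits a residue class $c \pmod s$, i.e. $a_n\not\equiv c\pmod s$ for all $n$. I would like to find a target point in $X_x$ that the orbit $(\sigma^{a_n}y)$ cannot approximate.

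The natural target is $\sigma^{-c}y'$ for an appropriate $y'$, but the cleanest route is to apply the Lemma to the point $z=\sigma^{c}y\in X_x$: then $\sigma^{a_n}y=\sigma^{a_n-c}z$, and since $a_n-c\not\equiv 0\pmod s$, i.e. $s\nmid (a_n-c)$, the Lemma gives
\[
d\bigl(z,\sigma^{a_n-c}z\bigr)=d\bigl(z,\sigma^{a_n}y\bigr)\geqslant 2^{-M}
\]
for all $n$, where $M$ is the constant from the Lemma applied to $z$ (note $z\in X_x$ is again Toeplitz with the same essential period $s$). Hence the orbit $(\sigma^{a_n}y)$ stays a fixed distance at least $2^{-M}$ away from the point $z\in X_x$, so it is not dense, contradicting our hypothesis. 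Therefore $(a_n)$ must meet every residue class modulo $s$, i.e. it gives a full set of residues.

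\emph{The main obstacle} I anticipate is the bookkeeping needed to legitimately apply the Lemma to $z=\sigma^c y$ rather than to $y$ itself: one must check that $z\in X_x$ (immediate, as $X_x$ is $\sigma$-invariant and closed) and that $z$ is a Toeplitz point with the same essential period $s$, so that the Lemma's hypotheses are met and the gap constant $M$ is well-defined for $z$. This is guaranteed by the remark preceding Theorem~\ref{resfull} that all Toeplitz points in $X_x$ share the same essential periods, together with minimality ensuring every point of $X_x$ is a Toeplitz point. Once that is in place the argument is a direct contrapositive with no further computation.
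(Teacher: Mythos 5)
Your argument is correct and is essentially the paper's own proof in contrapositive form: the paper fixes a residue $i$, uses density together with the fact that $\sigma$ is a homeomorphism to conclude $y\in\overline{\{\sigma^{a_n-i}y: n\in\Z^+\}}$, and then invokes the preceding Lemma applied to $y$; you instead shift the target to $z=\sigma^c y$ and invoke the same Lemma at $z$, which is the same maneuver.

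One caveat worth flagging: your parenthetical justification --- that minimality ensures every point of $X_x$ is a Toeplitz point, so that $z=\sigma^c y$ is Toeplitz with essential period $s$ --- is false. Infinite Toeplitz systems always contain non-Toeplitz points (the paper's remark only says that those points of $X_x$ which \emph{happen} to be Toeplitz share the essential periods of $x$), so $z$ need not be Toeplitz when $y$ is not. Fortunately this claim is unnecessary: in the Lemma the Toeplitz hypothesis and the essential period $s$ concern the generating word $x$ only, while the conclusion $y\notin\overline{\{\sigma^n y: s\nmid n\}}$ is asserted for an \emph{arbitrary} point $y\in X_x$, Toeplitz or not. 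So you may apply it directly to $z\in X_x$ (indeed, even the uniform gap $2^{-M}$ is not needed; non-membership in the closure already contradicts density), and your proof goes through as written once that parenthetical is deleted.
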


\begin{proof}
Pick $i\in [0, s-1]$. Since $\sigma^i y$ is in the closure of the orbit $\left(\sigma^{a_n}y\right)$ and $\sigma$ is a homeomorphism, we have
\[
y\in \overline{\{\sigma^{a_n-i}y: n\in\Z^+\}},
\]
so by the previous lemma we have $s|a_n-i$ for some $n$.
\end{proof}

In particular, we obtain

\begin{Cor}\label{fullset}
Let $(a_k)$ be a sequence of positive integers, $X_x$ be an infinite Toeplitz system, and $y\in X_x$. Assume that $(\sigma^{a_k}y)$ is dense in $X_x$. Then there exists a strictly increasing sequence $(n_t)$ of positive integers such that $n_t|n_{t+1}$ and $(a_k)$ gives a full set of residues modulo each $n_t$.
\end{Cor}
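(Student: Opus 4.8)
The plan is to build the divisibility chain $(n_t)$ out of least common multiples of essential periods, and to upgrade the per-period conclusion of Theorem \ref{resfull} to fullness modulo these multiples by working with neighbourhoods rather than with the averaged statement. First I would record that, since $X_x$ is infinite, the word $x$ is aperiodic, and hence has infinitely many essential periods: if there were only finitely many, say $e_1,\dots,e_r$ with least common multiple $L$, then every position would have its (finite) minimal period among $e_1,\dots,e_r$, and a minimal period is always essential, so every position would have minimal period dividing $L$; thus $\Per_L(x)=\Z$ and $x$ would be $L$-periodic, contradicting infiniteness. Fix an enumeration $e_1,e_2,\dots$ of distinct essential periods and set $n_t=\lcm(e_1,\dots,e_t)$.

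Then $n_t\mid n_{t+1}$ by construction. Moreover $(n_t)$ is unbounded: if it stabilised at some value $L$, then every $e_i$ would divide $L$, forcing the $e_i$ to lie among the finitely many divisors of $L$, contrary to there being infinitely many distinct essential periods. Passing to a subsequence, which is still a divisibility chain, I may therefore assume that $(n_t)$ is strictly increasing. It then remains to show that $(a_k)$ meets every residue class modulo each $n_t$.

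Here is the heart of the argument. Fix $t$. By the lemma preceding Theorem \ref{resfull}, for each $i\le t$ the point $y$ does not lie in $\overline{\{\sigma^n y:e_i\nmid n\}}$; hence $U_i:=X_x\setminus\overline{\{\sigma^n y:e_i\nmid n\}}$ is an open neighbourhood of $y$ with the property that $\sigma^n y\in U_i$ forces $e_i\mid n$ (otherwise $\sigma^n y$ would itself belong to the removed set). Put $V=\bigcap_{i\le t}U_i$, again an open neighbourhood of $y$; then $\sigma^n y\in V$ implies $e_i\mid n$ for all $i\le t$, i.e. $n_t\mid n$. Now fix $j\in\{0,\dots,n_t-1\}$. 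The set $\sigma^j V$ is open and contains $\sigma^j y\in X_x$, so by density of $(\sigma^{a_k}y)$ there is a $k$ with $\sigma^{a_k}y\in\sigma^j V$, that is $\sigma^{a_k-j}y\in V$; consequently $n_t\mid(a_k-j)$, so $a_k\equiv j\pmod{n_t}$. As $j$ was arbitrary, $(a_k)$ gives a full set of residues modulo $n_t$.

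The step I expect to be the only genuine obstacle is precisely this last one: Theorem \ref{resfull} by itself yields fullness only modulo each individual essential period, and fullness modulo several periods does not formally imply fullness modulo their least common multiple (a sequence can meet every class modulo $2$ and every class modulo $3$ while missing classes modulo $6$). The device that circumvents this is to pass to the neighbourhood form of the underlying lemma and to intersect the neighbourhoods $U_i$: approximating the single point $\sigma^j y$ closely enough forces $a_k-j$ to be divisible by all of $e_1,\dots,e_t$ simultaneously, hence by $n_t$. Everything else—aperiodicity producing infinitely many essential periods, and the lcm chain being unbounded and refinable to a strictly increasing one—is routine.
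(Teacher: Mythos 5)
Your proof is correct, but it follows a genuinely different route from the paper's. The paper states Corollary \ref{fullset} as an immediate consequence of Theorem \ref{resfull}; the implicit ingredient is the standard structural fact (from the theory of Toeplitz flows, cf.\ \cite{Downarowicz}) that an infinite Toeplitz word admits a period structure consisting of \emph{essential} periods, i.e.\ a strictly increasing chain $n_t\mid n_{t+1}$ of essential periods with $\bigcup_t\Per_{n_t}(x)=\Z$; Theorem \ref{resfull} then applies verbatim to each $n_t$. You instead take $n_t=\lcm(e_1,\dots,e_t)$ for an arbitrary enumeration of the essential periods, and such an lcm need not itself be an essential period, so Theorem \ref{resfull} cannot be quoted for it directly --- a point you correctly identify (hitting all residues modulo $2$ and modulo $3$ does not force hitting all residues modulo $6$). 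Your fix --- returning to the lemma preceding Theorem \ref{resfull}, forming the open neighbourhoods $U_i=X_x\setminus\overline{\{\sigma^n y: e_i\nmid n\}}$ of $y$, and intersecting them so that $\sigma^n y\in V=\bigcap_{i\le t}U_i$ forces $n_t\mid n$ --- is sound, and in effect proves a simultaneous strengthening of Theorem \ref{resfull}: the orbit gives a full set of residues modulo any lcm of essential periods. Your preliminary step, that an infinite Toeplitz system has infinitely many essential periods (via the observation that the minimal period of any position is essential, so finitely many essential periods would force $\Per_L(x)=\Z$ for their lcm $L$), is also correct and is needed in either approach. What the two routes buy: the paper's derivation is shorter but leans on a structure theorem it does not restate; yours is self-contained, using only the lemma, aperiodicity, and elementary topology, at the cost of reproving a slightly stronger form of the density-to-residues implication.
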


This easily implies, that prime orbits are never dense in infinite Toeplitz systems. In fact, we can say this even about $l$-almost primes:
\begin{Def}
For $l\in\mathbb{Z}^+$ we denote
\[
\mathbb{P}_l=\{n\in\mathbb{Z}^+: 1<\Omega(n)\leqslant l\},
\]
and we call elements of this set $l$-almost primes.
\end{Def}

\begin{Th}
Let $X_x$ be an infinite Toeplitz system and $y\in X_x$, and fix $l\in\Z^+$. The orbit $\{\sigma^{n}y: n\in\mathbb{P}_l\}$ is not dense in $X_x$.
\end{Th}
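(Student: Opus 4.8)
The plan is to leverage Corollary \ref{fullset}, which reduces the density question to an arithmetic one: if the orbit $\{\sigma^n y : n\in\mathbb{P}_l\}$ were dense in the infinite Toeplitz system $X_x$, then $\mathbb{P}_l$ would give a full set of residues modulo each term $n_t$ of some strictly increasing sequence with $n_t\mid n_{t+1}$. Since $X_x$ is infinite, such a sequence $(n_t)$ is genuinely unbounded, so it suffices to produce, for every fixed modulus $N$ (to be chosen as some $n_t$ large enough), a residue class $a \pmod N$ that contains no $l$-almost prime. This would contradict $\mathbb{P}_l$ hitting every residue class modulo $N$.

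The key step is to exhibit such a ``forbidden'' residue class. The natural choice is to force a small prime to divide $n$ to a high power: if I pick a prime $p$ and consider the residue class $a = p \pmod{p^{l+1}}$, then any $n\equiv a \pmod{p^{l+1}}$ satisfies $p\mid n$ but $p^{l+1}\nmid n$, so $\nu_p(n)\in\{1,\dots,l\}$. This alone does not forbid $n$ from being an $l$-almost prime, so I would instead choose $a$ so that $n$ is forced to have more than $l$ prime factors (counted with multiplicity) or to be divisible by a high prime power while also carrying other factors. Concretely, I would take $N = p^{l+1}$ for a prime $p$ and set $a \equiv p^{l} \pmod{p^{l+1}}$ adjusted so that every $n$ in the class has $p^{l}\mid n$ together with at least one further prime factor: for instance, choosing the class $a$ so that $n = p^l m$ with $p\nmid m$ and $m>1$ forced, giving $\Omega(n)\geq l+1$, hence $n\notin\mathbb{P}_l$. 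Care is needed because $n=p^l$ itself lies in $\mathbb{P}_l$, so the forbidden class must exclude that single value; this is arranged by passing to a larger modulus $n_t$ that is a multiple of $p^{l+1}$ and choosing the residue so the $p$-adic valuation is pinned at exactly $l$ while congruences modulo another prime force an extra factor.

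The cleanest route I would actually take is as follows. Fix any prime $p$ and consider residues modulo $p^{l+1}$. An integer $n$ with $\nu_p(n)=l$ and $n>p^l$ automatically satisfies $\Omega(n)\geq l+1$, so it is not in $\mathbb{P}_l$; the only $l$-almost prime with $\nu_p(n)=l$ is $n=p^l$ itself. The residue class $\{n : \nu_p(n)=l\}$ modulo $p^{l+1}$ is exactly $\{p^l, 2p^l, \dots\} \cap (\text{units times } p^l)$, i.e. $n\equiv c\,p^l \pmod{p^{l+1}}$ with $p\nmid c$. Among all such $n$, only the single value $n=p^l$ is an $l$-almost prime. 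Since $X_x$ is infinite, I may choose $n_t$ divisible by $p^{l+1}$ and larger than $p^l$; then the residue class $p^l \bmod p^{l+1}$, refined to $n_t$, can be taken to avoid the exceptional value $p^l$, yielding a residue class modulo $n_t$ that contains no element of $\mathbb{P}_l$. This contradicts Corollary \ref{fullset}.

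The main obstacle is the bookkeeping around the single exceptional value $n=p^l$ and ensuring the chosen residue class modulo the (large) $n_t$ genuinely contains no $l$-almost prime rather than merely excluding powers of one prime. The cleanest way to dispatch this is to note that $\mathbb{P}_l$ has asymptotic density zero (the almost-primes are sparse, e.g. $|\mathbb{P}_l\cap[1,x]| = O\!\bigl(x (\log\log x)^{l-1}/\log x\bigr)$), whereas any sequence hitting a full set of residues modulo every $n_t$ with $n_t\to\infty$ must, by Corollary \ref{fullset}, in particular hit a positive-density set of residues infinitely often; a density-zero set cannot be syndetic enough to represent every residue class modulo arbitrarily large moduli. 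I expect the valuation argument above to be the most direct and fully elementary path, so I would present that as the proof and relegate the density remark to a footnote; the delicate point to get right is simply confirming that the forbidden residue class modulo $n_t$ is nonempty as a residue class while containing no almost-prime, which follows immediately once $n_t>p^{l}$ and $p^{l+1}\mid n_t$.
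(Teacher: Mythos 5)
Your reduction via Corollary \ref{fullset} is exactly the paper's starting point, but the arithmetic half of your argument has a genuine gap: you do not get to choose the moduli. Corollary \ref{fullset} hands you a sequence $(n_t)$ determined by the system (built from its essential periods), and your forbidden-class construction requires $p^{l+1}\mid n_t$ for some prime $p$ and some $t$. Nothing guarantees this: the $n_t$ may all be squarefree (e.g.\ $n_t=p_1p_2\cdots p_t$ for distinct primes $p_i$, which is precisely the shape of the moduli in the constructions of Sections \ref{main1} and \ref{main2}), in which case no residue class of the form you describe exists modulo any $n_t$ and the valuation-pinning argument never gets off the ground. Your fallback is also false as stated: a set of density zero can perfectly well meet every residue class modulo every positive integer --- consider $\bigcup_n\{n!,\,n!+1,\dots,n!+n\}$ --- so ``$\mathbb{P}_l$ has density zero'' by itself contradicts nothing.

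The fix, and what the paper actually does, is to use the residue class $0$ rather than a class pinned by a $p$-adic valuation. Since $(n_t)$ is strictly increasing and $n_t\mid n_{t+1}$, each $n_{t+1}$ is a proper multiple of $n_t$, so $\Omega(n_{t+1})\geq\Omega(n_t)+1$, and hence $\Omega(n_{l+2})>l$. Every positive integer in the class $0\pmod{n_{l+2}}$ is a multiple of $n_{l+2}$ and therefore has $\Omega$-value exceeding $l$, so this class contains no element of $\mathbb{P}_l$ at all --- there is no exceptional value like your $p^l$ to dodge, and no assumption on the factorization of the $n_t$ is needed beyond what the divisibility chain already provides. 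That single observation closes the argument: density would force some $l$-almost prime to be divisible by $n_{l+2}$, a contradiction.
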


\begin{proof}
Assume otherwise, and let $(n_t)$ be the sequence given by Corollary (\ref{fullset}). Then $\Omega(n_{l+2})>l$, so $n_{l+2}$ does not divide any element of $\mathbb{P}_l$, and so we reach a contradiction.
\end{proof}

We now discuss the case of polynomial orbits. In this case Theorem \ref{resfull} has a converse:
\begin{Th}\label{dense}
Let $X_x$ be a Toeplitz system and $P\in\Z[X]$ be a permutation modulo each essential period of $X_x$. Then $(\sigma^{P(n)}y)$ is dense in $X_x$ for any $y\in X_x$.
\end{Th}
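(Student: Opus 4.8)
The plan is to prove that the closure $\overline{\{\sigma^{P(n)}y:n\in\N\}}$ contains every shift $\sigma^m x$, $m\in\Z$. Since $x$ is Toeplitz and $X_x$ is minimal, the shifts $\{\sigma^m x:m\in\Z\}$ are dense, so this containment forces $\overline{\{\sigma^{P(n)}y\}}=X_x$, which is exactly density of the orbit. Concretely, after fixing $m$ and a window radius $N$, it suffices to produce an $n$ with $y(P(n)+j)=x(m+j)$ for all $j\in[-N,N]$, i.e.\ to make $\sigma^{P(n)}y$ agree with $\sigma^m x$ on $[-N,N]$.

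First I would fix a convenient period structure. Every position of a Toeplitz word is periodic, and its minimal period divides each of its periods, by the same gcd-of-periods property $\Per_s(x)\cap\Per_{s'}(x)\subseteq\Per_{\gcd(s,s')}(x)$ used in the previous lemma. Hence each realized minimal period $s$ is an essential period: a witnessing position lies in $\Per_s(x)$ but in no $\Per_{s'}(x)$ with $s'<s$. Enumerating the realized minimal periods as $s_1,s_2,\dots$ and setting $n_t=\lcm(s_1,\dots,s_t)$ gives a chain $n_t\mid n_{t+1}$ with $\bigcup_t\Per_{n_t}(x)=\Z$. The crucial point is that $P$ is then a permutation modulo every $n_t$: any prime power $p^{\alpha}\parallel n_t$ divides some $s_i$, so $P$—being a permutation modulo the essential period $s_i$ by hypothesis—is a permutation modulo the divisor $p^{\alpha}$, and assembling these through the Chinese remainder theorem gives a permutation modulo $n_t$. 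Here I use only the elementary facts that an integer polynomial permuting $\Z/M\Z$ also permutes $\Z/d\Z$ for every $d\mid M$, and that permuting $\Z/p^{\alpha}\Z$ for each $p^{\alpha}\parallel M$ is equivalent to permuting $\Z/M\Z$.

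Next I would attach odometer coordinates to $y$. Choosing $l_i$ with $\sigma^{l_i}x\to y$ and passing to a diagonal subsequence, I may assume $l_i\bmod n_t$ is eventually a constant $g_t\in\Z/n_t\Z$ for every $t$, with $g_{t+1}\equiv g_t\pmod{n_t}$. Then for any position $a$ with $a+g_t\in\Per_{n_t}(x)$ (for an integer representative of $g_t$) one has $a+l_i\equiv a+g_t\pmod{n_t}$ for large $i$, so $y(a)=\lim_i x(a+l_i)=x(a+g_t)$. This matching property on periodic positions is what I will exploit; note that I need only the existence of one compatible family $(g_t)$, not its uniqueness.

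Finally, given $m$ and $N$, choose $t$ large enough that $[m-N,m+N]\subseteq\Per_{n_t}(x)$, which is possible since $\bigcup_t\Per_{n_t}(x)=\Z$. Because $P$ permutes $\Z/n_t\Z$, there is an $n$ with $P(n)\equiv m-g_t\pmod{n_t}$. For each $j\in[-N,N]$ the position $m+j$ lies in $\Per_{n_t}(x)$, hence so does the congruent position $P(n)+j+g_t$, and the matching property for $y$ at $P(n)+j$ yields $y(P(n)+j)=x(P(n)+j+g_t)=x(m+j)$, the last equality because $P(n)+j+g_t\equiv m+j\pmod{n_t}$ with both positions $n_t$-periodic. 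Thus $\sigma^{P(n)}y$ agrees with $\sigma^m x$ on $[-N,N]$, which completes the argument. I expect the only genuine obstacle to be the middle step, namely transferring the hypothesis from essential periods to the chosen period structure $(n_t)$: everything else is either soft (minimality and the cylinder topology) or routine Toeplitz bookkeeping, whereas the implication \emph{permutation modulo each essential period $\Rightarrow$ permutation modulo each $n_t$} is where the number theory actually enters, through the choice of $n_t$ as an lcm of essential periods together with the prime-power/CRT descent for permutation polynomials.
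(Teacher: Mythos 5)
Your proof is correct in substance and follows essentially the same route as the paper's: reduce to hitting cylinder sets, pass to an lcm of essential periods so that the relevant window of $x$ is periodic, use divisor-descent plus the Chinese remainder theorem to see that $P$ permutes residues modulo that lcm, and then solve a congruence $P(n)\equiv m-g_t$ to land in the cylinder. Your odometer coordinates $(g_t)$ make explicit the step the paper compresses into one sentence ("since $y\in X_x$, the word $\omega$ must appear in $y$ periodically with period $p$"), so your write-up is, if anything, more detailed at that point; targeting the shifts $\sigma^m x$ instead of arbitrary words of $\mathcal{L}_{X_x}$ is an equivalent reformulation.

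One assertion you make is false, though fortunately it is never needed. You claim that the minimal period of a position divides each of its periods, "by the gcd-of-periods property $\Per_s(x)\cap\Per_{s'}(x)\subseteq\Per_{\gcd(s,s')}(x)$". Neither statement is true: take $x(n)=1$ iff $n\equiv\pm1\pmod 6$ (a periodic, hence Toeplitz, word). Then $0\in\Per_2(x)\cap\Per_3(x)$, since $x$ vanishes on all even positions and on all multiples of $3$, yet $\Per_1(x)=\emptyset$ because $x$ is not constant; so the gcd inclusion fails, and position $0$ has periods $2$ and $3$ with minimal period $2\nmid 3$. (The lemma of the paper you appeal to derives $0\in\Per_{\gcd(n,s)}(x)$ from the much stronger hypothesis $x(an+bs)=x(0)$ for \emph{all} $a,b\in\Z$, for which writing $\gcd(n,s)=un+vs$ suffices; a pointwise intersection of two $\Per$-sets does not give that.) Luckily, the conclusion you actually need --- every realized minimal period $s$ is an essential period --- requires none of this: a witnessing position $a$ lies in $\Per_s(x)$ but in no $\Per_{s'}(x)$ with $s'<s$, hence $\Per_s(x)\neq\Per_{s'}(x)$ for all $s'<s$, which is exactly the definition of an essential period; this is precisely the justification you give after the word "Hence". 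Similarly, your covering claim $\bigcup_t\Per_{n_t}(x)=\Z$ uses only the true monotonicity $\Per_{s'}(x)\subseteq\Per_s(x)$ for $s'\mid s$. So delete the false sentence and the argument stands as written.
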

\begin{proof}
It is enough to show that this orbit visits each set of the form \[U=\{z\in X_x: z[-k, k]=\omega\}\] for $\omega\in \mathcal{L}_{X_x}$ of length $2k+1$. Fix such an $\omega$, and let $x[a, a+2k]=\omega$. Then for $i\in [a, a+2k]$ we let $p_i$ be the essential period of $x_i$ in $x$. This way the word $\omega$ appears in $x$ starting at positions $a+np$ for $n\in\Z$ and $p=\lcm\{p_i: i\in [a, a+2k]\}$. Since the polynomial $P$ is a permutation modulo each $p_i$, it is also a permutation modulo $p$.

Now, since $y\in X_x$, the word $\omega$ must appear in $y$ periodically with period $p$, say starting at positions $a'+np$ for $k\in\Z$. If $b>0$ is such that $P(b)\equiv a'+k\pmod{p}$, then $y[P(b)-k, P(b)+k]=\omega$, so $\sigma^{P(b)}y\in U$.
\end{proof}
\begin{Th}
Let $P$ be a polynomial which is not permutative. Let $X_x$ be an infinite Toeplitz system, and $y\in X_x$. The orbit $\{\sigma^{P(n)}y: n\in\Z^+\}$ is not dense in $X_x$.
\end{Th}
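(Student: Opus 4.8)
\emph{The plan is to} argue by contradiction, reducing the statement to Corollary \ref{fullset} and the elementary fact that a surjective self-map of a finite set is a bijection. Suppose the orbit $\{\sigma^{P(n)}y:n\in\Z^+\}$ were dense in $X_x$. Since discarding finitely many points does not affect the closure, I may restrict to $n$ large enough that $P(n)$ is monotone (hence the exponents $P(n)$ are, up to finitely many terms, a genuine sequence of integers to which the earlier results apply). Applying Corollary \ref{fullset} to the sequence $a_n=P(n)$, I obtain a strictly increasing sequence $(n_t)$ with $n_t\mid n_{t+1}$ such that $(P(n))_n$ gives a full set of residues modulo each $n_t$.

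The key translation is the next step. Saying that $(P(n))_n$ gives a full set of residues modulo $n_t$ means precisely that the induced map $P:\Z/n_t\Z\to\Z/n_t\Z$ is \emph{surjective}. Because $\Z/n_t\Z$ is finite, surjectivity forces injectivity, so $P$ is a permutation modulo $n_t$. As this holds for every $t$ and the $n_t$ are pairwise distinct, $P$ is a permutation modulo infinitely many positive integers, i.e.\ $P$ is permutative. This contradicts the hypothesis, completing the proof.

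\emph{I do not expect any genuine obstacle here:} the only substantive point is the surjective-implies-bijective observation on the finite quotient $\Z/n_t\Z$, and all the dynamical content has already been isolated in Corollary \ref{fullset} (which in turn rests on Theorem \ref{resfull}). The one mild technical nuisance to address cleanly is the passage from the index set $n\in\Z^+$ to a well-defined sequence of exponents when $P$ changes sign; since closures are insensitive to finite modifications and the relevant condition concerns only residues modulo the $n_t$ (which are unaffected by sign), this can be handled in a single sentence and does not affect the argument.
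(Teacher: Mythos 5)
Your proof is correct and is precisely the argument the paper intends: the paper's proof is the one-line remark that the statement ``follows directly from the definition of permutativity and Corollary \ref{fullset}'', and you have filled in exactly those details (full set of residues modulo $n_t$ means $P$ is surjective on $\Z/n_t\Z$, hence bijective by finiteness, hence $P$ is permutative, contradiction). The sign/positivity caveat you flag is handled correctly, since Theorem \ref{resfull}, from which the corollary follows, applies to arbitrary integer sequences and residues are insensitive to sign.
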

\begin{proof}
Follows directly from the definition of permutativity and Corollary \ref{fullset}.
\end{proof}

\begin{Th}\label{permequi}
Let $P$ be a permutative polynomial, and let $x$ be a Toeplitz word such that $X_x$ is regular and $P$ is a permutation modulo each essential period of $X_x$. Then for any $y\in X_x$ and any continuous $F:X_x\to X_x$ we have
\[
\lim_{N\to\infty}\frac{1}{N}\sum\limits_{m<N}F\left(\sigma^{P(m)}y\right)=\int_{X_x}F d\mu,
\]
uniformly in $y$, where $\mu$ is the unique $\sigma$-invariant measure on $X_x$.
\end{Th}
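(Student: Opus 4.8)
The plan is to establish convergence against the indicator functions $\raz_U$ of cylinder sets $U=\{z\in X_x:z[-k,k]=\omega\}$; since their finite linear combinations are dense in $C(X_x)$ and both the averages and the integral are bounded by $\|F\|_\infty$, uniform convergence on cylinders upgrades to all continuous $F$ by a standard $3\varepsilon$ argument. Thus I must show that $\frac1N\sum_{m<N}\raz_U(\sigma^{P(m)}y)\to\mu(U)$ uniformly in $y$, noting that $\raz_U(\sigma^{P(m)}y)=1$ precisely when $y[P(m)-k,P(m)+k]=\omega$.

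First I would fix a period structure adapted to $P$. As $X_x$ is infinite (the periodic case being immediate) it has infinitely many essential periods, and I take $(n_t)$ with $n_t$ the least common multiple of the first $t$ of them, so that $n_t\mid n_{t+1}$ and $\bigcup_t\Per_{n_t}(x)=\Z$; writing $d_t=|\pi_{n_t}(\Per_{n_t}(x))|/n_t$, regularity gives $d_t\to1$. The crucial point is that $P$ is then a permutation modulo every $n_t$: for each prime power $p^a$ exactly dividing $n_t$ one has $p^a\mid s$ for some essential period $s$, and a permutation modulo $s$ restricts to a permutation modulo any divisor, so $P$ permutes $\Z/p^a\Z$; the Chinese remainder theorem then gives that $P$ permutes $\Z/n_t\Z$. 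Hence $P(m)\bmod n_t$ is $n_t$-periodic in $m$ and hits each residue exactly once per period, so
\[
\frac1N\big|\{m<N:P(m)\bmod n_t\in S\}\big|=\frac{|S|}{n_t}+O\!\Big(\frac{n_t}{N}\Big)
\]
for every $S\subseteq\Z/n_t\Z$, with the error uniform in $S$.

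Next I would read off the word $\omega$ from the periodic skeleton. For $y\in X_x$ let $\theta_t\in\Z/n_t\Z$ be its phase at level $t$, i.e. the common residue modulo $n_t$ of the exponents $m_i$ in any sequence $\sigma^{m_i}x\to y$. Call a position $j$ \emph{determined at level $t$} if $j+\theta_t-k,\dots,j+\theta_t+k$ all lie in $\pi_{n_t}(\Per_{n_t}(x))$; at such $j$ the window $y[j-k,j+k]$ is forced and equals the corresponding window of $x$, independently of the undetermined coordinates of $y$. Let $E_t^\theta$ be the determined residues whose forced window is $\omega$, and $B_t^\theta$ the \emph{ambiguous} residues, those with some of $j+\theta_t-k,\dots,j+\theta_t+k$ outside $\pi_{n_t}(\Per_{n_t}(x))$; both are translates by $-\theta_t$ of their $\theta_t=0$ versions, so $|E_t^\theta|=|E_t|$ and $|B_t^\theta|=|B_t|\le(2k+1)(1-d_t)n_t=o(n_t)$ are independent of $y$. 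Since $P(m)\bmod n_t\in E_t^\theta$ forces $\raz_U(\sigma^{P(m)}y)=1$ while $P(m)\bmod n_t\notin E_t^\theta\cup B_t^\theta$ forces it to vanish, the average is squeezed between $|E_t|/n_t$ and $(|E_t|+|B_t|)/n_t$ up to the uniform error $O(n_t/N)$; taking $P=\mathrm{id}$ (also a permutation modulo $n_t$) together with unique ergodicity identifies $\mu(U)=\lim_t|E_t|/n_t$. Letting $N\to\infty$ and then $t\to\infty$, and using $|B_t|/n_t\to0$ with the $O(n_t/N)$ term uniform in $y$, yields the uniform convergence to $\mu(U)$.

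The main obstacle is the second step: producing a period structure modulo which $P$ is an honest permutation. The hypothesis supplies permutations only modulo the individual essential periods, and the work lies in the prime-power and Chinese remainder bookkeeping that promotes this to their least common multiples — this is exactly what forces $P(m)\bmod n_t$ to equidistribute. Once that is secured, regularity renders the ambiguous residues negligible and the squeezing argument is routine.
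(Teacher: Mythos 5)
Your proposal is correct, and its engine is the same as the paper's: take $n_t$ to be least common multiples of essential periods so that $P$ is a permutation modulo each $n_t$ (you spell out the divisor-plus-CRT argument that the paper only asserts), use regularity to make the level-$t$ undetermined positions negligible, and use strict ergodicity of regular Toeplitz systems to identify the limit. Where you genuinely diverge is in the passage from the base point $x$ to an arbitrary $y\in X_x$. The paper never mentions phases: following Lemma \ref{convcond}, it proves the statement for all shifts $\sigma^r x$ with an error bound uniform in $r\in\Z$, and then gets arbitrary $y$ for free, since every $y\in X_x$ is a coordinatewise limit of shifts of $x$ and one may assume $F$ depends on finitely many coordinates. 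You instead work with $y$ directly through its odometer phase $\theta_t$, i.e.\ the common residue mod $n_t$ of any approximating sequence of exponents; the squeeze of the average between $|E_t|/n_t$ and $(|E_t|+|B_t|)/n_t$, with $P=\mathrm{id}$ identifying the limit as $\mu(U)$, is then clean and yields the required uniformity in $y$. This route works, but it leans on one fact you assert without proof: that $\theta_t$ is well defined, i.e.\ that for $\sigma^{m_i}x\to y$ the residues $m_i\bmod n_t$ stabilize and do not depend on the chosen sequence. This is the standard almost-automorphy (odometer-factor) property of Toeplitz flows; within this paper it can be deduced from the lemma on essential periods preceding Theorem \ref{resfull} (if two distinct phases $\alpha\ne\alpha'$ modulo $n_t$ occurred, a compactness argument would produce $\sigma^{k_i}x\to x$ with $k_i\not\equiv 0$ modulo some essential period dividing $n_t$, contradicting that lemma). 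You should either cite this fact or include that deduction; the paper's uniform-in-$r$ reduction is precisely the device that avoids needing it. The remaining ingredients of your argument --- exact equidistribution of $P(m)\bmod n_t$ with error $O(n_t/N)$ uniform in the target set, the bound $|B_t|\leqslant(2k+1)(1-d_t)n_t$, and the $3\varepsilon$ upgrade from cylinder indicators to all of $C(X_x)$ --- are all sound and match the paper's estimates in substance.
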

\begin{proof}
Let $(x_t), (n_t)$ be a viable pair producing the Toeplitz word $x$, and such that each $n_t$ is a least common multiple of some essential periods of $X_x$. This way $P$ is a permutation modulo each $n_t$. We proceed as in the proof of Lemma \ref{convcond}. It is enough to show that
\[
\lim_{N\to\infty}\frac{1}{N}\sum\limits_{m<N}F\left(\sigma^{P(m)+r}x\right)=\int_{X_x}F d\mu
\]
uniformly in $r\in\Z$. We again assume that $F$ only depends on coordinates from $-C$ to $C$ for some $C\in\Z^+$, and that $F:X_x\to[0, 1]$. Fix $t$, and let
\[
A_a=\{i\in [0, n_t-1]: X_t(i+a+r)=?\}
\]
and 
\[
A=\bigcup_{a=-C}^{C}A_a.
\]
Then $|A|\leqslant (2C+1)?_t$, and for $i\notin A$ we have
\[
F(\sigma^{P(i)+a+r}x)=F(\sigma^{P(i)+a+r\pmod{n_t}}x),
\]
so since $P+r$ is a permutation modulo $n_t$, we get
\[
\abs{\sum\limits_{m<n_t}F(\sigma^{P(m)+r}x)-\sum\limits_{m<n_t}F(\sigma^{m}x)}\leqslant 2\cdot (2C+1)?_t
\]
and as in the proof of Lemma \ref{convcond} we obtain
\[
\abs{\frac{1}{N}\sum\limits_{m<N}F(\sigma^{P(m)+r}x)-\frac{1}{n_t}\sum\limits_{m<n_t}F(\sigma^{m}x)}\leqslant \frac{2\cdot (2C+1)?_t}{n_t}+\frac{n_t}{N}+\frac{n_t}{N-n_t}
\]
for $N>n_t$. Since
\[
\lim\limits_{t\to\infty}\frac{1}{n_t}\sum\limits_{m<n_t}F(\sigma^{m}x)=\int_{X_x}F d\mu,
\]
we obtain the desired convergence, and it is clearly uniform in $r\in\Z$.
\end{proof}

In particular, using Theorem \ref{resfull} we obtain Theorem \ref{denseequi}.

This theorem is no longer true if we remove the assumption of regularity -- the method we used in Section \ref{main1} would give the relevant counterexamples. We will omit the details, and instead adapt (Example 2.3, \cite{iwanik}) to show that there are in fact strictly ergodic counterexamples (which we could not ensure with methods from Section \ref{main1}). 
\begin{Th}\label{iwacounter}
Let $P\in\Z[X]$ be permutative and let $\deg P>1$. There exists a strictly ergodic Toeplitz system $X_x$, for which each orbit $(\sigma^{P(n)}y)$ for $y\in X_x$ is dense in $X_x$, but the limit
\begin{equation}\label{div3}
\lim_{N\to\infty}\frac{1}{N}\sum_{m\leq N}G(\sigma^{P(m)}x)
\end{equation}
does not exist for the continuous function $G(y)=(-1)^{y(0)}$.
\end{Th}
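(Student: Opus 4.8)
The plan is to realize $X_x$ through the block construction of Fact \ref{iwatoep}, starting from a strictly ergodic but \emph{non-regular} skeleton modeled on Example~2.3 of \cite{iwanik}, and then to perturb it along the values of $P$ so as to break convergence of (\ref{div3}) without destroying strict ergodicity. Non-regularity here is forced, not chosen: as soon as we make the orbits $(\sigma^{P(n)}y)$ dense, Theorem \ref{denseequi} forbids the system from being regular, since regularity together with density would force equidistribution and hence convergence of the averages (\ref{div3}). So the whole difficulty is to stay strictly ergodic while being non-regular and biased along $P$.

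First I would fix the moduli. Since $P$ is permutative, by the remark following the definition of permutativity either $P$ is a permutation modulo infinitely many primes, or $P$ is a permutation modulo every power of a single prime $p$; in the first case set $n_t=p_1\cdots p_t$ for distinct such primes, and in the second $n_t=p^t$. In either case $n_t\mid n_{t+1}$ and $P$ is a permutation modulo each $n_t$ (by the Chinese remainder theorem in the first case). I would build $X_x$ from the sequence $(n_t)$ so that every essential period of $X_x$ divides some $n_t$. Since a permutation modulo $n_t$ induces (being surjective on a finite set) a permutation modulo any divisor of $n_t$, the polynomial $P$ is then a permutation modulo every essential period, and density of every orbit $(\sigma^{P(n)}y)$ is immediate from Theorem \ref{dense}; crucially, this holds no matter how the remaining free coordinates are resolved below.

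Next I would set up the sets $W_t\subset\{0,1\}^{n_t}$. Adapting Example~2.3 of \cite{iwanik}, I would arrange a set $H_t\subset[0,n_t-1]$ of free coordinates (positions where the words of $W_t$ disagree) of density bounded below by a fixed constant $\delta>\tfrac12$, while keeping the aligned block frequencies $\ap(B,C)$ convergent uniformly in $C\in W_t$, so that Theorem \ref{unierg} yields strict ergodicity. On this skeleton, at stage $t+1$ I would single out the sparse set $\{P(m)\bmod n_{t+1}:m<C_t\}$ with $C_t=\lfloor n_{t+1}^{1/\deg P}\rfloor$, and among those positions that are still free assign the value $0$ for even $t$ and $1$ for odd $t$, leaving every other free coordinate alone. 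Because $\deg P>1$ we have $C_t=o(n_{t+1})$, so only $o(n_{t+1})$ coordinates are fixed at each stage; hence these assignments perturb each frequency $\ap(B,\cdot)$ by $o(1)$ and leave intact both strict ergodicity (via Theorem \ref{unierg}) and the lower bound $\delta$ on the density of never-fixed coordinates.

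It remains to see that (\ref{div3}) diverges. A position $P(m)$ with $m<C_t$ is available for the stage-$(t+1)$ assignment exactly when $P(m)\bmod n_t$ lies in the level-$t$ free set $H_t$; since $P$ is a permutation modulo $n_t$ and $C_t=an_t+r$ with $a\to\infty$, the number of such $m$ is at least $a\,|H_t|\geqslant(\delta-o(1))C_t$. Filling these with the constant value $\varepsilon_t\in\{0,1\}$ therefore gives
\[
(-1)^{\varepsilon_t}\,\frac{1}{C_t}\sum_{m<C_t}(-1)^{x(P(m))}\;\geqslant\;(2\delta-1)-o(1),
\]
with $\varepsilon_t=0$ for even $t$ and $\varepsilon_t=1$ for odd $t$, so the averages along $C_t$ oscillate between values $\geqslant c$ and $\leqslant-c$ for some fixed $c>0$, and the limit in (\ref{div3}) cannot exist. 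The main obstacle is precisely the one the skeleton must resolve: keeping the free-coordinate density above $\tfrac12$ — which is what lets the controlled bias overwhelm the uncontrolled fixed coordinates — while still meeting the uniform-frequency criterion of Theorem \ref{unierg}. The hypotheses enter exactly at these two points: permutativity supplies the moduli and, through Theorem \ref{dense}, the forced density, whereas $\deg P>1$ makes the values of $P$ sparse enough to be overwritten at every scale without disturbing the invariant measure.
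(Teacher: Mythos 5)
Your outline follows the paper's strategy quite closely (the same choice of moduli $n_t$ via permutativity, Theorem \ref{dense} for density of the $P$-orbits, Theorem \ref{unierg} for strict ergodicity, and the same counting argument for divergence), but it has a genuine gap at the central technical step: you treat the free coordinates as if they could be assigned independently. In the block construction of Fact \ref{iwatoep} --- the only framework in which the strict ergodicity criterion of Theorem \ref{unierg} applies --- one does not get to set individual symbols: a word in $W_{t+1}$ is a concatenation of words from $W_t$, so the only freedom at stage $t+1$ is the choice of an entire $W_t$-word for each block of length $n_t$, and all free positions inside one block are set simultaneously by that single choice. Concretely, with $W_t=\{B_t^{(0)},B_t^{(1)}\}$ complementary on the free set $H_t$, if two values $P(m)\ne P(m')$ fall into the same block at offsets $j,j'\in H_t$ with $B_t^{(0)}(j)\ne B_t^{(0)}(j')$, then \emph{no} choice of block word realizes the common target value at both positions; so ``among those positions that are still free assign the value $0$'' is not an operation available to you, and your count of at least $a|H_t|\geqslant(\delta-o(1))C_t$ filled positions does not follow. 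The paper removes exactly this obstruction before the construction starts: it replaces $P(x)$ by $\pm P(x+k)$ so that $P(n+1)-P(n)>n$, and imposes $n_{t+1}>(M+1)(10n_t)^d$, so that the relevant values $P(i)$ land in pairwise distinct blocks (the indices $\lfloor P(i)/n_t\rfloor$ are all distinct); only then can each biased value be implemented by its own block choice, and moreover these forced choices are verified to be fewer than $m_t/2$, so they do not collide with the prefix/suffix constraints and leave enough blocks to balance the counts --- which is what makes the exact frequency computation behind strict ergodicity go through.

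A secondary weakness: you deduce preservation of strict ergodicity from ``only $o(n_{t+1})$ coordinates are fixed at each stage,'' but Theorem \ref{unierg} requires convergence of $\ap(B,C)$ for each \emph{fixed} $B\in W_s$ as $t\to\infty$, and the perturbations compound across stages; each touched position forces a whole block, so the per-stage frequency error is of order $C_t n_t/n_{t+1}$, and you need these errors to be summable over $t$, not merely $o(1)$. This is fixable by strengthening the growth conditions on $(n_t)$ (the paper's condition (\ref{recipsum}) plays this role), but it must be stated. The first gap, by contrast, is the actual content of the paper's normalization of $P$ and of conditions (1)--(3) on the patterns $\varepsilon_t$, and your proposal cannot be completed without supplying that ingredient.
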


\begin{proof}
Let $M$ be the absolute value of the leading coefficient of $P$. Since $d>1$, by replacing $P(x)$ with $\pm P(x+k)$ for some large $k$ we can assume that $P(n+1)-P(n)>n$ and $P(n)>Mn^d$ for $n\geqslant 0$. Let $(n_t)$ be a strictly increasing sequence starting with $n_0=1$ satisfying $n_t|n_{t+1}$ such that $P$ is permutative modulo each $n_t$. By restricting to a subsequence, we can additionally assume that $n_{t+1}>(M+1)(10n_t)^{d}$ and
\begin{equation}\label{recipsum}
\sum\limits_{t=0}^{\infty}\frac{2n_t}{n_{t+1}}<1/5,
\end{equation}
and that all $m_t:=n_{t+1}/n_t$ are of the same parity.
We now recursively define sequences of words $B_t^{(0)}, B_t^{(1)}\in\{0, 1\}^{n_t}$. We start with $B_0=0$ and $B_0'=1$.

Given $B_t^{(0)}$ and $B_t^{(1)}$, we let 
\[
A_t=\{i\in [0, n_t-1]: B_t^{(0)}(i)\ne B_t^{(1)}(i)\}.
\]
We let
\[
B_{t+1}^{(0)}=B_t^{(\varepsilon_t(0))}B_t^{(\varepsilon_t(1))}\ldots B_t^{(\varepsilon_t(m_t-1))}
\]
for a sequence $\varepsilon_t\in \{0, 1\}^{m_t}$ satisfying the following:
\begin{enumerate}
\item $\varepsilon_t(0)=0$ and $\varepsilon_t(m_t-1)=1$.
\item for $i>n_t$ satisfying $P(i)<n_{t+1}-n_t$ and $P(i)\in A_t+n_t\Z$ we have $B_{t+1}^{(0)}(i)=0$ if $t$ is odd and 1 if $t$ is even
\item \[|\{i\in [0, m_t-1]:\varepsilon_t(i)=0\}|=\begin{cases}m_t/2\quad&\text{if $m_t$ is even}\\ (m_t+1)/2\quad &\text{if $m_t$ is odd}\end{cases}\]
\end{enumerate}
Notice that the second condition can be satisfied, since for $i$ in the specified range we have $P(i+1)-P(i)>n_t$, and so the values of $\lfloor P(i)/n_t\rfloor\in [1, m_t-2]$ are all distinct, and so we can pick each of the $\varepsilon_t(\lfloor P(i)/n_t\rfloor)$ separately and without interfering with condition (1). This way we specify the value of $\varepsilon_t$ at no more than 
\[
2+\sqrt[d]{n_{t+1}}<3\frac{n_{t+1}}{\sqrt[d]{n_{t+1}}}<3\frac{n_{t+1}}{10n_t}<m_t/2
\]
places, so we can now fulfill condition (3).

We then define
\[
B_{t+1}^{(1)}=B_t^{(\varepsilon_t'(0))}B_t^{(\varepsilon_t'(1))}\ldots B_t^{(\varepsilon_t'(m_t-1))},
\]
where $\varepsilon'(i)=1-\varepsilon(i)$ for $i\notin\{ 0, m_t-1\}$, but $\varepsilon'(0)=0$ and $\varepsilon'(m_t-1)=1$.

Letting $W_t=\{B_t^{(0)}, B_{t}^{(1)}\}$ we construct $x\in \{0, 1\}^{\Z}$ as in section \ref{toep}; by Fact \ref{iwatoep} the word $x$ is then Toeplitz.
 If the $m_t$ are even, by induction we have
\[
\ap(B_t^{(\varepsilon)}, B_s^{(\varepsilon')})=\frac{1}{2}
\]
for any $t<s$ and $\varepsilon, \varepsilon'\in\{0, 1\}$. By Theorem \ref{unierg} in this case $X_x$ is uniquely ergodic.

If the $m_t$ are odd, by a similar computation as in Example 2.3 in \cite{iwanik} we obtain
\[
\ap(B^{(\varepsilon)}_t, B^{(\varepsilon')}_s)=\frac{1}{2}\left(1\pm \frac{1}{m_tm_{t+1}\cdots m_{s-1}}\right),
\]
where the sign is positive if $\varepsilon=\varepsilon'$ and negative otherwise. Hence in this case for any $t, \varepsilon, \varepsilon'$ we have
\[
\lim\limits_{s\to\infty}\ap(B_t^{(\varepsilon)}, B_s^{(\varepsilon')})=\frac{1}{2},
\]
so by Theorem \ref{unierg} the system $X_x$ is uniquely ergodic.

All essential periods of $X_x$ are clearly divisors of some $n_t$. By Theorem \ref{dense} we therefore see that $(\sigma^{P(n)}y)$ is dense in $X_x$ for any $y\in X_x$.

All that remains is to see that the limit (\ref{div3}) does not exist. 

We first estimate $|A_t|$ inductively: we have $|A_0|=1$, and
\[
|A_{t+1}|\geqslant (m_t-2)|A_{t}|,
\]
so
\[
\frac{1}{n_t}|A_t|\geqslant \prod_{s=0}^{t-1}\left(1-\frac{2n_t}{n_{t+1}}\right)\geqslant \frac{4}{5}
\]
by (\ref{recipsum}). 

Let us now estimate the number of $i> n_t$ that satisfy the assumption of condition (2). Since $P$ is permutative, among any $n_t$ consecutive values of $i$ precisely $|A_t|$ satisfy $P(i)\in A_t+n_t\Z$. Let $C_t=\max\{i\in\Z^+: P(i)<n_{t+1}-n_t\}$. For large $t$ we have $C_t>\sqrt[d]{n_{t+1}/(M+1)}>10n_t$, so
\[
\abs{\{i>n_t: P(i)<n_{t+1}-n_t\text{ and }P(i)\in A_t+n_t\Z\}}>|A_t|\cdot \frac{(C_t-2n_t)}{n_t}>\frac{4}{5}\cdot \frac{4}{5}C_t>\frac{3}{5}C_t.
\]
Therefore for large even $t$ we have
\[
\frac{1}{C_t}\sum\limits_{m<C_t}G(\sigma^{P(m)}x)=\frac{1}{C_t}\sum\limits_{m<C_t}(-1)^{x_{t+1}(P(m))}\geqslant \frac{1}{C_t}\left(\frac{3}{5}C_t-\frac{2}{5}C_t\right)\geqslant \frac{1}{5}
\]
and for large odd $t$ we have
\[
\frac{1}{C_t}\sum\limits_{m<C_t}G(\sigma^{P(m)}x)\leqslant-\frac{1}{5},
\]
and so the limit (\ref{div3}) does not exist.
\end{proof}

\section{Acknowledgments}
I would like to thank Adam Kanigowski for introducing me to the study of sparse orbits in Toeplitz systems, and for helpful discussions and comments regarding this note.

\end{document}